%
%
%
%
\documentclass[12pt]{amsart}

\usepackage{graphicx}
\usepackage{amssymb}
\usepackage{float}
\usepackage{fullpage}

\usepackage{comment}

\newtheorem{theorem}{Theorem}[section]
\newtheorem{lemma}[theorem]{Lemma}
\newtheorem{corollary}[theorem]{Corollary}
\newtheorem{proposition}[theorem]{Proposition}

\theoremstyle{definition}
\newtheorem{definition}[theorem]{Definition}

\theoremstyle{remark}
\newtheorem{remark}{Remark}

\numberwithin{equation}{section}


\newcommand{\R}{{\mathbb R}}

\newcommand{\N}{{\mathbb N}}


\begin{document}

\title[Mertens, dissected]{Mertens' prime product formula, dissected}

\author{Jared Duker Lichtman}
\address{Mathematical Institute, University of Oxford, Oxford, OX2 6GG, UK}

\email{jared.d.lichtman@gmail.com}

\date{March 16, 2021.}


\begin{abstract}
In 1874, Mertens famously proved an asymptotic formula for the product of $p/(p-1)$ over all primes $p$ up to $x$. Observe this product equals the reciprocal sum of all integers composed of prime factors up to $x$.
It is natural to restrict such series to integers with a fixed number $k$ of prime factors. In this article, we obtain formulae for these series for each $k$, which together dissect Mertens' original estimate. The proof is by elementary methods of a combinatorial flavor.
\end{abstract}

\subjclass[2010]{11N25, 11N37, 11A51}

\keywords{Mertens' prime product; prime zeta function; Sathe--Selberg theorem; smooth; friable}

\maketitle

\section{Introduction}
We begin with the Euler-Mascheroni constant $\gamma=0.57721\cdots$, defined as the limit of the difference between the harmonic series up to $x$ and $\log x$. The ubiquitous constant $\gamma$ crops up in many contexts, notably, in the $3^\text{rd}$ of three results from a celebrated paper of Mertens \cite{Mert} on the distribution of prime numbers.

As notation, throughout we write $f(x) = O(g(x))$ and $f(x)\ll g(x)$ to mean $|f(x)/g(x)|$ is bounded, while $f(x)\sim g(x)$ means $\lim_{x\to\infty} f(x)/g(x) =1$. Also, let $\log_2 x = \log\log x$, and let $p$ denote a prime number.
\begin{theorem}[Mertens, 1874] \label{thm:Mert3thms}
There exists a constant $\beta>0$ for which
\begin{align}
\sum_{p\le x}\frac{\log p}{p}  = \ \log x \ + \ O(1), \qquad \sum_{p\le x}\frac{1}{p}  & = \ \log_2 x + \beta \ + \ O\Big(\frac{1}{\log x}\Big) \label{eq:mert1p}\\
\prod_{p\le x}\Big(1-\frac{1}{p}\Big)^{-1}  &\sim \ e^\gamma \log x. \label{eq:Mertprod}
\end{align}
\end{theorem}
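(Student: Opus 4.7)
The plan is to take a logarithm of the product, reduce the linear part via \eqref{eq:mert1p}, handle higher powers by absolute convergence, and then identify the resulting constant as $e^\gamma$ by matching two expansions of $\log\zeta(s)$ at $s=1$. Using $-\log(1-1/p) = \sum_{k\ge 1}(kp^k)^{-1}$, one has
\[
\log\prod_{p\le x}\Big(1-\frac{1}{p}\Big)^{-1} = \sum_{p\le x}\frac{1}{p} + \sum_{p\le x}\sum_{k\ge 2}\frac{1}{kp^k}.
\]
By \eqref{eq:mert1p} the first sum equals $\log_2 x + \beta + O(1/\log x)$, while the second, dominated by $\sum_p 1/(p(p-1))<\infty$, converges to the absolute constant $C:=\sum_p\sum_{k\ge 2}(kp^k)^{-1}$ with tail past $x$ of size $O(1/x)$. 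Exponentiating then gives $\prod_{p\le x}(1-1/p)^{-1} = e^{\beta+C}\log x\,(1+O(1/\log x))$, reducing the theorem to the identity $\beta+C=\gamma$.

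For that identity, I would compare two forms of $\log\zeta(s)$ as $s\to 1^+$. The Laurent expansion $\zeta(s) = (s-1)^{-1}+\gamma+O(s-1)$ yields $\log\zeta(s)+\log(s-1)\to 0$. On the other hand, the Euler product gives $\log\zeta(s) = P(s) + C(s)$, where $P(s):=\sum_p p^{-s}$ and $C(s):=\sum_p\sum_{k\ge 2}(kp^{ks})^{-1}$ is continuous at $s=1$ with $C(1)=C$; hence $P(s)+\log(s-1)\to -C$.

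The hard part will be computing this same limit directly from \eqref{eq:mert1p}, so that $\gamma$ emerges explicitly. By Abel summation with $S(t):=\sum_{p\le t}1/p$, one has $P(s) = (s-1)\int_2^\infty S(t)t^{-s}\,dt$. Substituting $S(t) = \log_2 t + \beta + O(1/\log t)$ and changing variables $v = (s-1)\log t$, the main $\log_2 t$ piece becomes
\[
\int_{(s-1)\log 2}^\infty \big(\log v - \log(s-1)\big)e^{-v}\,dv \ \longrightarrow\ -\gamma - \log(s-1) \qquad (s\to 1^+),
\]
where $\gamma$ appears via the classical evaluation $\int_0^\infty \log v\cdot e^{-v}\,dv = \Gamma'(1) = -\gamma$. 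The $\beta$ piece contributes $\beta$, and the $O(1/\log t)$ piece vanishes in the limit (split into a bounded head and a tail using that the error in \eqref{eq:mert1p} is $o(1)$). Thus $P(s)+\log(s-1) \to \beta - \gamma$, and comparing with the previous paragraph's limit $-C$ yields $\beta + C = \gamma$.
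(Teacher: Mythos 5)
The paper does not prove Theorem~\ref{thm:Mert3thms}; it records it as classical background and points to Montgomery--Vaughan (their Theorem~2.7) for the relation~\eqref{eq:beta}, which is precisely the identity $\gamma=\beta+C$ that you establish in your final paragraph. So there is no in-paper proof to compare against.

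Your derivation is correct, and it is the standard one found in Montgomery--Vaughan and Tenenbaum. Logarithmizing the product splits it into $\sum_{p\le x}1/p$, handled by \eqref{eq:mert1p} as $\log_2 x+\beta+O(1/\log x)$, plus the $k\ge 2$ series, which is dominated by $\sum_p 1/(p(p-1))$ and converges to $C=\sum_p\sum_{k\ge 2}(kp^k)^{-1}$ with tail $O(1/x)$; exponentiating gives $\prod_{p\le x}(1-1/p)^{-1}=e^{\beta+C}\log x\,\big(1+O(1/\log x)\big)$. Pinning down $\beta+C=\gamma$ by playing the Laurent expansion $\zeta(s)=(s-1)^{-1}+\gamma+O(s-1)$ (hence $\log\zeta(s)+\log(s-1)\to 0$) off against the Abel-summed representation $P(s)=(s-1)\int_2^\infty S(t)t^{-s}\,dt$, and producing $\gamma$ from $\int_0^\infty(\log v)e^{-v}\,dv=\Gamma'(1)=-\gamma$, is exactly the textbook route. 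The two points you wave at do go through: after the substitution $v=(s-1)\log t$, the factor $\int_{(s-1)\log 2}^\infty e^{-v}\,dv=2^{1-s}$ differs from $1$ by $O(s-1)$, which kills the extra $\log(1/(s-1))$, and the $O(1/\log t)$ error contributes $(s-1)\int_{(s-1)\log 3}^\infty v^{-1}e^{-v}\,dv=O\big((s-1)\log\tfrac{1}{s-1}\big)\to 0$. One caveat worth stating explicitly: the theorem as displayed bundles all three Mertens estimates, while your argument derives only~\eqref{eq:Mertprod} (indeed with the sharper $1+O(1/\log x)$), taking~\eqref{eq:mert1p} as a black box. That is the sensible scope here, since identifying the constant as $e^\gamma$ is the actual content and~\eqref{eq:Mertprod} is what the rest of the paper uses.
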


Here $\beta=0.26149\cdots$ is Mertens' constant, which is known to satisfy
\begin{align}\label{eq:beta}
\beta -\gamma = \sum_p \big(\log(1-\tfrac{1}{p})+\tfrac{1}{p}\big) = - \sum_p\sum_{j\ge2}\frac{p^{-j}}{j} = - \sum_{j\ge2}\frac{Z(j)}{j},
\end{align}
where $Z(s) = \sum_p p^{-s}$ denotes the prime zeta function, for $s>1$. See for instance Theorem 2.7 in \cite[p.50]{MVtext}.

Now by expanding Mertens' prime product in \eqref{eq:Mertprod}, we have
\begin{align}\label{eq:dissect}
\prod_{p\le x}\Big(1-\frac{1}{p}\Big)^{-1} = \prod_{p\le x}\Big(1+\frac{1}{p}+\frac{1}{p^2}+\cdots\Big) = \sum_{P^+(n)\le x}\frac{1}{n}
\end{align}
where $P^+(n)$ denotes the largest prime factor of $n$. 

Consider ``dissecting'' the sum in \eqref{eq:dissect} according to the number of prime factors of $n$ with multiplicity, denoted $\Omega(n)$. Our main result is an asymptotic formula for this dissected sum.

\begin{theorem}\label{thm:main}
For each fixed $k\ge1$, we have
\begin{align}\label{eq:main}
\sum_{\substack{\Omega(n)=k\\P^+(n)\le x}}\frac{1}{n} \ = \ \sum_{j=0}^k\frac{c_{k-j}}{j!}\big(\log_2 x + \beta\big)^j \ & + \ O_k\Big(\frac{(\log_2 x)^{k-1}}{\log x}\Big)
\end{align}
where the sequence $(c_k)_{k=0}^\infty$ is recursively defined by $c_0=1$ and
\begin{align}\label{eq:recursion}
c_k = \frac{1}{k}\sum_{j=2}^k c_{k-j}\,Z(j).
\end{align}
\end{theorem}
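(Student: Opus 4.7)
The plan is to prove \eqref{eq:main} by induction on $k$, via a recursion for $S_k(x) := \sum_{\Omega(n)=k,\, P^+(n)\le x} 1/n$ that parallels the recursion \eqref{eq:recursion} defining the $c_k$. The starting point is the elementary identity $\Omega(n) = \sum_{p^b \mid n,\, b\ge 1} 1$; summing $1/n$ over $n$ with $\Omega(n)=k$ and $P^+(n)\le x$, and making the substitution $n = p^b m$ (with $m$ unconstrained apart from $\Omega(m)=k-b$ and $P^+(m)\le x$), one obtains
\begin{equation*}
k\,S_k(x) \;=\; \sum_{b=1}^{k} S_{k-b}(x) \sum_{p\le x}\frac{1}{p^b}.
\end{equation*}
Mertens' second theorem \eqref{eq:mert1p} gives $\sum_{p\le x} 1/p = \log_2 x + \beta + O(1/\log x)$, while for each fixed $b\ge 2$ the tail bound $\sum_{p>x} p^{-b} \ll x^{1-b}$ yields $\sum_{p\le x} p^{-b} = Z(b) + O(x^{1-b})$.

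Setting $L = \log_2 x + \beta$, the base case $k=1$ of the induction is immediate since $c_0 = 1$ and $c_1 = 0$. For the inductive step, I substitute the hypothesis $S_j(x) = \sum_{i=0}^{j} c_{j-i}L^i/i! + O_j(L^{j-1}/\log x)$ into the recursion and extract the coefficient of $L^i$ for each $0\le i\le k$. The $b=1$ term contributes $c_{k-i}/(i-1)!$ when $i\ge 1$, while for $i\le k-1$ the terms $b\ge 2$ contribute $\sum_{b=2}^{k-i} Z(b)\, c_{k-i-b}/i!$; by applying \eqref{eq:recursion} to the index $k-i$, the latter simplifies to $(k-i)\, c_{k-i}/i!$, and the two contributions combine to $k\, c_{k-i}/i!$. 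Dividing by $k$ recovers the claimed main term, and the error coming from $S_{k-1}(x)\cdot O(1/\log x)$ is $O_k(L^{k-1}/\log x)$ in view of the inductive bound $S_{k-1}(x) \ll_k L^{k-1}$, while the $b\ge 2$ errors $O(x^{1-b})\cdot S_{k-b}(x)$ are negligibly smaller.

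I expect the conceptual crux to be isolating the identity $k\, S_k(x) = \sum_b S_{k-b}(x)\sum_p p^{-b}$, particularly recognising $\Omega(n) = \sum_{p^b\mid n} 1$ as the mechanism that couples the recursion to the prime-zeta values $Z(b)$. Once this is in place, the remaining technical point is the bookkeeping of boundary cases when matching coefficients: for $i=0$, only the $b\ge 2$ terms contribute and collapse exactly to $k c_k$ by \eqref{eq:recursion}, while for $i=k$ only the $b=1$ term survives, giving $c_0/(k-1)! = k\cdot c_0/k!$. These boundaries serve as a consistency check that \eqref{eq:recursion} is the unique recursion compatible with the induction.
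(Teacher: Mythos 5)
Your argument is correct and takes a genuinely different route from the paper. The paper first proves the partition identity of Proposition~\ref{prop:Pkpart}, expressing $Z_k(s,x)$ as a sum over partitions of $k$ of products of $Z(js,x)$, and then establishes via Lemma~\ref{lem:general} that the recursion \eqref{eq:recursion} is equivalent to the analogous partition formula \eqref{eq:ckpartdef} for $c_k$; the theorem follows by substituting Mertens' asymptotics into the partition identity and matching the two expansions term by term. You instead isolate the single Newton-type recursion $kS_k(x)=\sum_{b=1}^k S_{k-b}(x)\sum_{p\le x}p^{-b}$ (in your notation $S_k(x)=Z_k(1,x)$), which one may regard as the coefficient of $z^k$ in the logarithmic derivative of the generating function $\prod_{p\le x}(1-z/p)^{-1}$, and you run a direct induction on $k$: matching coefficients of powers of $L=\log_2 x+\beta$, the $b=1$ term supplies $i\,c_{k-i}/i!$ and the $b\ge2$ terms collapse via \eqref{eq:recursion} to $(k-i)c_{k-i}/i!$, so the two combine to $k\,c_{k-i}/i!$. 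This is shorter and entirely self-contained for Theorem~\ref{thm:main}, and your error bookkeeping (the $b=1$ error $O(1/\log x)\cdot S_{k-1}(x)\ll_k L^{k-1}/\log x$ dominating the power-saving $b\ge2$ errors) is handled correctly. The paper's heavier partition machinery is not wasted effort, though: the explicit formula \eqref{eq:ckpartdef} is reused in Section~3 to identify the generating function $C_p(z)$ in closed form, which underpins the exponentially precise asymptotics for $c_k$ in Theorem~\ref{thm:ck2k}.
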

Theorem \ref{thm:main} may be viewed as a ``dissection'' of Mertens' prime product formula. Indeed, as shown later in \eqref{eq:dissectterms}, the main term $e^\gamma\log x$ in \eqref{eq:Mertprod} may be expressed as the series over all $k\ge1$ of the the main terms in \eqref{eq:main} (i.e. the sum over $j\le k$).\footnote{Here ``dissection'' is meant to highlight the formal compatibly of main terms. Whereas the estimate \eqref{eq:main} itself does not necessarily hold uniformly over all $k\ge1$.}

We note this terminology was introduced by Pollack \cite{PP}, who dissected a classical mean value theorem of Hall and Tenenbaum. 

\subsection{Uniform estimates via complex analysis} 
Classically, the analogous series to \eqref{eq:main} has been studied, replacing the condition $P^+(n)\le x$ with the more common $n\le x$.

Mertens' $1^{\rm st}$ theorem implies, by induction on each fixed $k\ge1$,
\begin{align}\label{eq:Landau}
\sum_{\substack{\Omega(n) = k \\ n\le x}}\frac{1}{n} \ \sim \ \frac{(\log_2 x)^k}{k!}
\end{align}
as $x\to\infty$, see \cite[p.228]{MVtext}. Note \eqref{eq:Landau} historically attributed to Landau \cite{Landau}. This is another example of dissection, as the sum over all $k$ of each side gives $\sum_{n\le x}\frac{1}{n}$ and $\log x$, respectively. We also note that the asymptotic \eqref{eq:Landau} also holds with $\Omega(n)$ replaced by $\omega(n)$, the number of distinct prime factors of $n$.

However, \eqref{eq:Landau} only holds for fixed $k$. The celebrated theorem of Sathe and Selberg implies the following uniform estimate for $k$ less than $2\log_2 x$.

\begin{theorem}[Sathe--Selberg]
Define $\nu(z) = \frac{1}{\Gamma(z+1)}\prod_p(1-\frac{z}{p})^{-1}(1-\frac{1}{p})^z$, and let $r = k/\log_2 x$. For any $\varepsilon>0$, as $x\to\infty$ we have uniformly for $r \le 2-\varepsilon$,
\begin{align}\label{eq:SatheSelb}
\sum_{\substack{\Omega(n) = k \\ n\le x}}\frac{1}{n} \ \sim \ \nu(r) \frac{(\log_2 x)^{k}}{k!}.
\end{align}
\end{theorem}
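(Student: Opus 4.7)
The plan is to deduce the asymptotic for $T_k(x) := \sum_{n \le x,\ \Omega(n)=k} 1/n$ from the Sathe--Selberg estimate for the counting function $\pi_k(t) := \#\{n \le t : \Omega(n)=k\}$ by Abel summation. Recall the counting version (proved by Sathe and Selberg) asserts that uniformly for $k/\log_2 t \le 2-\varepsilon'$,
$$\pi_k(t) = (1+o(1))\,\nu\!\left(\tfrac{k}{\log_2 t}\right)\frac{t}{\log t}\frac{(\log_2 t)^{k-1}}{(k-1)!}.$$
Throughout I write $y = \log_2 x$ and $r = k/y$, so the goal is $T_k(x) \sim \nu(r)\,y^k/k!$. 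The case $k=1$ is handled directly by Mertens' second theorem together with $\nu(0)=1$.

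First I would apply Abel summation to get
$$T_k(x) = \frac{\pi_k(x)}{x} + \int_{2^k}^x \frac{\pi_k(t)}{t^2}\,dt.$$
The boundary term is smaller than the target by a factor $O(r/\log x)$, hence negligible. To handle the integral I introduce a cutoff $t_0$ determined by $\log_2 t_0 = k/(2-\varepsilon/2)$, chosen so Sathe--Selberg applies on $[t_0,x]$ while Hardy--Ramanujan's uniform upper bound $\pi_k(t) \ll \tfrac{t}{\log t}(\log_2 t + C)^{k-1}/(k-1)!$ handles $[2^k, t_0]$. The latter contributes at most $(u_0+C)^k/k!$ with $u_0 = k/(2-\varepsilon/2)$, whose ratio to the target $y^k/k!$ is $((u_0+C)/y)^k$; since $u_0/y = r/(2-\varepsilon/2) \le (2-\varepsilon)/(2-\varepsilon/2) < 1$, this ratio is $o(1)$ both in the fixed-$k$ regime (where $u_0/y \to 0$) and the large-$k$ regime (where it decays exponentially).

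On the main range $[t_0,x]$, substituting the Sathe--Selberg asymptotic and changing variables $u = \log_2 t$ converts the integral to
$$\int_{u_0}^y \nu(k/u)\,\frac{u^{k-1}}{(k-1)!}\,du.$$
I would integrate by parts using the antiderivative $u^k/k!$ of $u^{k-1}/(k-1)!$, obtaining
$$\left[\nu(k/u)\,\frac{u^k}{k!}\right]_{u_0}^y + \int_{u_0}^y \nu'(k/u)\,\frac{u^{k-2}}{(k-1)!}\,du.$$
The upper boundary gives the desired main term $\nu(r)\,y^k/k!$; the lower boundary is negligible by the exponential ratio above; and the residual integral, after rescaling $u = ys$, becomes $\tfrac{y^{k-1}}{(k-1)!}\int \nu'(r/s)\,s^{k-2}\,ds$, which is $O(y^{k-1}/k!)$ since $\nu'$ is bounded on $[r,2-\varepsilon/2]$ and $s^{k-2}$ concentrates at $s=1$. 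Relative to the main term this is $O(1/y) = o(1)$.

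The main obstacle is the uniform control near the critical value $r=2$, where $\nu(z)$ has a pole. The cutoff $t_0$ must be chosen so that $r_{t_0} := k/\log_2 t_0$ stays strictly below $2$ (so Sathe--Selberg applies on $[t_0,x]$), yet high enough that the Hardy--Ramanujan tail over $[2^k, t_0]$ contributes $o(y^k/k!)$. The algebraic inequality $(2-\varepsilon)/(2-\varepsilon/2) < 1$ lets both conditions hold simultaneously; securing it uniformly across the entire range $r \le 2-\varepsilon$ is the delicate step, whereas the interior analysis is essentially just dominance of $u^{k-1}$ at the upper endpoint.
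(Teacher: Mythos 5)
Your overall strategy --- partial summation against the counting form of the Sathe--Selberg theorem, with a cutoff $t_0$ below which one falls back on a uniform upper bound for $\pi_k(t)=\#\{n\le t:\Omega(n)=k\}$ --- is exactly the route the paper indicates, and your treatment of the main range $[t_0,x]$ (change of variables $u=\log_2 t$, integration by parts, boundedness of $\nu'$ on $[0,2-\varepsilon/2]$) correctly supplies the detail the paper leaves implicit. The genuine gap is the uniform bound you invoke on $[2^k,t_0]$. The Hardy--Ramanujan inequality $\#\{n\le t:\ \omega(n)=k\}\ll \frac{t}{\log t}\frac{(\log_2 t+C)^{k-1}}{(k-1)!}$ holds uniformly in $k$ only for $\omega(n)$; its analogue for $\Omega(n)$ is false precisely in the regime where you apply it, namely $k$ large compared with $\log_2 t$. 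Indeed, the integers $n=2^{k-1}p\le t$ alone give $\pi_k(t)\ge \pi(t/2^{k-1})\gg t/(2^k\log t)$, which exceeds your claimed bound once $k\ge c\log_2 t$ for a suitable constant $c$ (and at $t=2^k$ your bound is below $1$ while $\pi_k(2^k)\ge 1$). Since your interval $[2^k,t_0]$ consists exactly of the $t$ with $k/\log_2 t\ge 2-\varepsilon/2$, the lemma fails where you use it, so the estimate ``at most $(u_0+C)^k/k!$'' for the tail is unjustified. The pole of $\nu$ at $z=2$ is the analytic reflection of this: for $\Omega$ the prime $2$ genuinely dominates once $k$ exceeds about $2\log_2 t$, and no bound of Hardy--Ramanujan shape can hold uniformly in $k$.

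The repair is the one the paper names: use the Erd\H{o}s--S\'ark\"ozy bound $\pi_k(t)\ll k^4\,2^{-k}\,t\log t$, valid for all $t,k\ge 1$. With your cutoff $\log_2 t_0=k/(2-\varepsilon/2)$ this gives a tail
$\ll k^4 2^{-k}\int_{2^k}^{t_0}\frac{\log t}{t}\,dt\ll k^4\exp\big(k(\tfrac{2}{2-\varepsilon/2}-\log 2)\big)$,
and the elementary inequality $\tfrac{2}{2-\varepsilon/2}-\log 2<1-\log(2-\varepsilon)$ for $0<\varepsilon<2$ (equivalently $(a-1)^2+1>0$ with $a=\varepsilon/2$, after differentiating in $\varepsilon$) shows this is exponentially smaller than $y^k/k!=\exp\big(k(1-\log r)-O(\log k)\big)$ uniformly for $r\le 2-\varepsilon$, while in the fixed-$k$ regime the tail is $O_{k,\varepsilon}(1)$ against $y^k/k!\to\infty$; since $\nu$ is continuous and positive on $[0,2-\varepsilon]$, this is also $o(\nu(r)y^k/k!)$. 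So your cutoff and all subsequent steps survive intact; only the sub-$t_0$ ingredient must be swapped from Hardy--Ramanujan to Erd\H{o}s--S\'ark\"ozy (or Nicolas' asymptotic for $\pi_k$ in the range $k>2\log_2 t$).
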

To see this, \cite[Theorem 7.19]{MVtext} or \cite[Theorem 6.5]{Tentext} gives an asymptotic in the stated range
\begin{align*}
\sum_{\substack{\Omega(n) = k \\ n\le x}}1 \ = \ \nu(r) \,\frac{x}{\log x}\frac{(\log_2 x)^{k-1}}{(k-1)!}\ \Big(1 + O_\varepsilon\big(\tfrac{k}{\log_2 x}\big)\Big).
\end{align*}
Then \eqref{eq:SatheSelb} follows by partial summation, combined with e.g. the Erd\H{o}s-Sark\"ozy upper bound $O(k^4\,2^{-k}x\log x)$ uniformly for all $x,k\ge1$, see \cite{ErdSark}.
\begin{remark}
As $\nu(r)=1$ only when $r=0,1$, Landau's estimate \eqref{eq:Landau} holds if and only if $k=o(\log_2 x)$ or $k=(1+o(1))\log_2 x$.
\end{remark}
\begin{remark}
\cite[Theorem 6.4]{Tentext} gives an analogous result with $\Omega(n)$ replaced by $\omega(n)$, by substituting the function $\nu(z)$ above with $\lambda(z)=\frac{1}{\Gamma(z+1)} \prod_p(1+\frac{z}{p-1})(1-\frac{1}{p})^z$.
\end{remark}
\begin{remark}
The Sathe--Selberg theorem is proved through contour integration in the complex plane. Recently, Popa \cite{Popa2,Popa3} and Tenenbaum \cite{Tenen} have obtained results by similar analytic methods, for a generalized series that replaces the conditions $\Omega(n)=k$ and $n\le x$ by the condition $p_1\cdots p_k\le x$ over $k$ independent prime variables. Or equivalently, they weight $n$ by the number of its ordered prime factorizations.
\end{remark}

The uniformity coming from sophisticated analytic tools exemplifies the larger tension within mathematics, between proving the strongest results and using the simplest arguments. Of particular interest historically is the case $k=1$, i.e. the Prime Number Theorem. Hadamard and de la Vall\'ee Poussin initially gave proofs in 1896 using complex analysis, and for decades many believed it impossible to prove by elementary means. It came as a great shock when Selberg and Erd\H{o}s did so in 1948. For an intriguing historical account, see Spencer and Graham \cite{SpencGram}.

As such, we emphasize that in Theorem \ref{thm:main}, our particular conditions $\Omega(n)=k, P^+(n)\le x$ in \eqref{eq:main} are directly amenable to elementary methods when $k$ is fixed. Nevertheless, applying analytic tools to \eqref{eq:main} do lend the advantage of uniformity in $k< (2-\varepsilon)\log_2 x$.

\begin{theorem}\label{thm:uniform}
Let $r = k/\log_2 x$. For any $\varepsilon>0$, as $x\to\infty$ we have uniformly for $r \le 2-\varepsilon$,
\begin{align}
\sum_{\substack{\Omega(n)=k\\P^+(n)\le x}}\frac{1}{n} \ \sim \ \nu(r)e^{r\gamma}\, \Gamma(r+1) \ \frac{(\log_2 x)^k}{k!}.
\end{align}
\end{theorem}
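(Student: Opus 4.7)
The plan is to pass to the generating function
\[
F_x(z) \;:=\; \prod_{p\le x}\Big(1-\frac{z}{p}\Big)^{-1} \;=\; \sum_{P^+(n)\le x}\frac{z^{\Omega(n)}}{n},
\]
so that the target sum is exactly the coefficient $[z^k]F_x(z)$. This reframes the problem as coefficient extraction from an analytic function, which I would attack via Cauchy's integral formula and the saddle point method. The approach parallels Selberg's proof of the Sathe--Selberg theorem, but is somewhat cleaner: the condition $P^+(n)\le x$ removes the need for any Perron inversion, since $F_x(z)$ is already a finite Euler product naturally supported on $x$-friable integers.

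First I would establish a Mertens-type asymptotic for $F_x(z)$ uniformly in $z$. Using the defining identity $\nu(z)\Gamma(z+1) = \prod_p (1-z/p)^{-1}(1-1/p)^z$, one factors
\[
F_x(z) \;=\; \nu(z)\Gamma(z+1)\prod_{p\le x}\Big(1-\frac{1}{p}\Big)^{-z}\cdot \prod_{p>x}\Big(1-\frac{z}{p}\Big)\Big(1-\frac{1}{p}\Big)^{-z}.
\]
For $|z|\le 2-\varepsilon$ each factor in the tail product is $1+O_\varepsilon(1/p^2)$, so the tail contributes $1+O_\varepsilon(1/x)$. Combined with Mertens' third theorem, $\prod_{p\le x}(1-1/p)^{-1}\sim e^\gamma\log x$, this yields
\[
F_x(z) \;\sim\; \psi(z)(\log x)^z, \qquad \psi(z) \;:=\; \nu(z)\Gamma(z+1)e^{z\gamma},
\]
uniformly for $|z|\le 2-\varepsilon$.

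Second, writing $L=\log_2 x$ and $r=k/L$, Cauchy's formula on the circle $|z|=r$ gives
\[
[z^k]F_x(z) \;\sim\; \frac{1}{2\pi i}\oint_{|z|=r}\psi(z)\,e^{zL-(k+1)\log z}\,dz.
\]
The phase $\phi(z)=zL-k\log z$ has a saddle at $z=r$ with $\phi''(r)=k/r^2$, and Taylor-expanding to second order in the window $|\theta|\ll k^{-1/2}$ (with $z=re^{i\theta}$) reduces the contour integral to a Gaussian whose evaluation gives
\[
[z^k]F_x(z) \;\sim\; \psi(r)\,\frac{L^k}{k!} \;=\; \nu(r)\Gamma(r+1)e^{r\gamma}\,\frac{(\log_2 x)^k}{k!},
\]
as claimed. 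The hard part will be making this saddle point analysis uniform for $r\in(0,2-\varepsilon]$: one must bound $|F_x(re^{i\theta})|/F_x(r)$ on the whole circle away from $\theta=0$, which becomes delicate as $r\to 2$ since $\nu(z)$ acquires a pole at $z=2$ coming from the Euler factor $(1-z/2)^{-1}$.
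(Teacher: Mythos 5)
Your approach matches the paper's in all essentials: both use the generating function $F_x(z)=\prod_{p\le x}(1-z/p)^{-1}$, the factorization $F_x(z)\sim\psi(z)(\log x)^z$ (the paper writes $\eta$ for your $\psi$), and Cauchy's formula on the circle $|z|=r$. The only substantive difference lies in the error analysis. You propose a classical saddle-point localization --- restrict to a window $|\theta|\ll k^{-1/2}$, Gaussian-approximate the phase $\phi(z)=zL-k\log z$, and then bound the tail of the circle --- and you correctly flag the tail estimate as the delicate step. The paper sidesteps localization entirely with a slick device: $\frac{1}{2\pi i}\int_{|z|=r}(\log x)^z z^{-k-1}\,dz = (\log_2 x)^k/k!$ holds \emph{exactly} by Cauchy's formula, and moreover the first-moment integral $\int_{|z|=r}(z-r)(\log x)^z z^{-k-1}\,dz$ vanishes \emph{exactly} precisely because $r=k/\log_2 x$ is the saddle. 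Consequently the error reduces to $\int_{|z|=r}\bigl[\eta(z)-\eta(r)-\eta'(r)(z-r)\bigr](\log x)^z z^{-k-1}\,dz$, which by Taylor's theorem is $\ll\int_{|z|=r}|z-r|^2(\log x)^{\Re z}\,|dz|/r^{k+1}$, and this is bounded directly over the whole circle via $\cos(2\pi\theta)\le 1-8\theta^2$ and Stirling's formula, yielding a relative error $O_\varepsilon(k/(\log_2 x)^2)$ without any window-splitting. Your concern about the pole of $\nu$ at $z=2$ is genuine but harmless in the range $r\le 2-\varepsilon$: in both routes it merely forces the implicit constants to depend on $\varepsilon$ (via a uniform bound on $\eta''$ on $|z|\le 2-\varepsilon$). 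Your route is viable but needs the full localization carried out; the paper's is tidier and gives the quantitative error essentially for free.
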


Hence by comparision with the Sathe--Selberg theorem, we obtain the following elegant relation between sums over $P^+(n)\le x$ with those over $n\le x$.
\begin{corollary}\label{cor:friablereg}
Let $r = k/\log_2 x$. For any $\varepsilon>0$, as $x\to\infty$ we have uniformly for $r \le 2-\varepsilon$,
\begin{align}
\sum_{\substack{\Omega(n)=k\\P^+(n)\le x}}\frac{1}{n} \ \sim \ e^{r\gamma}\, \Gamma(r+1)\sum_{\substack{\Omega(n)=k\\n\le x}}\frac{1}{n}.
\end{align}
\end{corollary}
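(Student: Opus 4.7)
The plan is to obtain the corollary as an immediate consequence of Theorem \ref{thm:uniform} together with the Sathe--Selberg asymptotic
\[
\sum_{\substack{\Omega(n)=k\\ n\le x}}\frac{1}{n} \ \sim \ \nu(r)\,\frac{(\log_2 x)^k}{k!}
\]
recorded in the passage preceding the statement, where both asymptotics hold uniformly for $r=k/\log_2 x\le 2-\varepsilon$. The two main terms share the common factor $\nu(r)(\log_2 x)^k/k!$; cancelling it leaves the ratio $e^{r\gamma}\Gamma(r+1)$, which is precisely what the corollary asserts.

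To legitimise this cancellation uniformly, I would verify that $\nu(r)$ is bounded away from zero on the compact interval $r\in[0,2-\varepsilon]$. From the defining product $\nu(z)=\Gamma(z+1)^{-1}\prod_p(1-z/p)^{-1}(1-1/p)^z$, each factor is positive and continuous for real $z\in[0,2-\varepsilon]$ (the first-prime factor $(1-z/2)^{-1}$ is where positivity would fail, and it requires exactly the condition $z<2$). Hence $\nu$ attains a positive minimum on $[0,2-\varepsilon]$, and one may divide the Theorem \ref{thm:uniform} asymptotic by the Sathe--Selberg asymptotic term-by-term, uniformly in $r$.

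The hard part has already been done in Theorem \ref{thm:uniform}, so the obstacle here is essentially bookkeeping: writing $\sum_{P^+(n)\le x}1/n = \big(\nu(r)e^{r\gamma}\Gamma(r+1)(\log_2 x)^k/k!\big)\,(1+o(1))$ and $\sum_{n\le x}1/n = \big(\nu(r)(\log_2 x)^k/k!\big)\,(1+o(1))$, and dividing. The factor $\nu(r)(\log_2 x)^k/k!$ is positive and bounded below (after fixing $\varepsilon$), so the quotient of the two main terms behaves as $e^{r\gamma}\Gamma(r+1)(1+o(1))$ uniformly in the stated range, which is the claim.
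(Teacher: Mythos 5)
Your proof is correct and matches the paper's approach: Corollary \ref{cor:friablereg} is obtained simply by dividing the asymptotic of Theorem \ref{thm:uniform} by the Sathe--Selberg asymptotic, the common factor $\nu(r)(\log_2 x)^k/k!$ cancelling to leave $e^{r\gamma}\Gamma(r+1)$. The extra check that $\nu(r)$ is bounded away from zero on $[0,2-\varepsilon]$ is harmless but not actually needed --- since both estimates hold uniformly and $\nu(r)>0$ on that interval, the quotient of the two $(1+o(1))$ factors already tends to $1$ uniformly.
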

\begin{remark}
One may prove an analogous result for $\omega(n)$, with the same factor $e^{r\gamma}\, \Gamma(r+1)$.
\end{remark}

Note the factor $e^{r\gamma}\, \Gamma(r+1)=1$ if and only if $r=0$. Hence Corollary \ref{cor:friablereg} implies
\begin{align}\label{eq:frireg}
\sum_{\substack{\Omega(n)=k\\P^+(n)\le x}}\frac{1}{n} \ \sim \ \sum_{\substack{\Omega(n)=k\\n\le x}}\frac{1}{n}
\end{align}
if and only if $k$ is in the uniform range $k=o(\log_2 x)$. This is an example of friable regularity, in the following sense. Recall an integer $n$ with $P^+(n)\le x$ is called $x$-smooth or $x$-friable.

\begin{definition}
A sequence $(a_{n})_{n\in\N}$ is {\it friably regular}
\footnote{This extends the notion of friable regularity as in \cite{Duff},\cite{FouvTen}, from equality of limits of convergent series to asymptotic equality of (possibly non-convergent) partial sums.} if $\sum_{n\le x}a_{n} \sim \sum_{P^+(n)\le x}a_{n}$ as $x\to \infty$.
\end{definition}
For example, the friable regularity of $(\mu(n)/n)_{n\in\N}$ is equivalent to the prime number theorem. We also extend the definition to families of sequences.
\begin{definition}
A one-parameter family $(a_{n,x})_{n\in\N}$, indexed by $x\in\R$, is {\it friably regular} if $\sum_{n\le x}a_{n,x} \sim \sum_{P^+(n)\le x}a_{n,x}$ as $x\to\infty$.
\end{definition} 
In particular, Corollary \ref{cor:friablereg} implies the family $({\bf 1}_{\Omega(n)=k}/n)_{n\in\N}$, indexed by $k=k(x)$, is friably regular if and only if $k=o(\log_2 x)$.

\subsection{The coefficients $c_k$}

Finally, we emphasize an important feature of the combinatorial approach in Theorem \ref{thm:main}. The recursion in \eqref{eq:recursion} enables rapid computation of the coefficients $c_k$ to high precision, the first few displayed below.

\[\begin{array}{cc|cc}
k & c_k & k & c_k\\
\hline
0,1 & 1,0          & 6 & 0.0108213\cdots\\
2 & 0.226123\cdots & 7 & 0.0054110\cdots\\
3 & 0.058254\cdots & 8 & 0.0027375\cdots\\
4 & 0.044814\cdots & 9 & 0.0013752\cdots\\
5 & 0.020323\cdots & 10 & 0.0006903\cdots
\end{array}\]

At first glance, one might not expect the coefficients $c_k$ arising from \eqref{eq:recursion} to exhibit any particular structure. However, the combinatorial approach shows $c_k$ to satisfy 
exponentially precise asymptotics.
\begin{theorem}\label{thm:ck2k}
The coefficients satisfy $c_k = \eta\,2^{-k} + O(3^{-k})$. Here the constant $\eta$ is given by $\eta = e^{-1}\prod_{p>2}(1-\tfrac{2}{p})^{-1}e^{-2/p} =0.71206\cdots$.
\end{theorem}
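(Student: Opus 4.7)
The plan is to encode $(c_k)$ in its generating function $F(z) := \sum_{k\ge 0} c_k z^k$, convert \eqref{eq:recursion} into an ODE to obtain a closed form for $F$, and then perform standard singularity analysis.

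Multiplying \eqref{eq:recursion} by $k$ and recognizing a Cauchy product on the right gives the separable ODE
\[
F'(z) \;=\; F(z) \sum_{j\ge 2} Z(j)\, z^{j-1}.
\]
A primitive of $\sum_{j\ge 2} Z(j) z^{j-1}$ is, after swapping the sums over $p$ and $j$ and invoking $\sum_{j\ge 2} x^j / j = -\log(1-x) - x$, equal to $-\sum_p\bigl(\log(1-z/p) + z/p\bigr)$ (compare \eqref{eq:beta}). Integrating with $F(0) = 1$ yields the closed form
\[
F(z) \;=\; \prod_p (1-z/p)^{-1}\, e^{-z/p},
\]
a meromorphic function on $\mathbb{C}$ with simple poles exactly at the primes.

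I would then peel off the two smallest-modulus poles at $z = 2$ and $z = 3$. Writing $F(z) = (1-z/2)^{-1} H_2(z)$ with $H_2(z) := e^{-z/2}\prod_{p>2}(1-z/p)^{-1} e^{-z/p}$ holomorphic near $z = 2$, we get $H_2(2) = e^{-1}\prod_{p>2}(1-2/p)^{-1} e^{-2/p} = \eta$, matching the stated constant. An analogous decomposition at $z = 3$ produces a constant $\eta_3 = -2\, e^{-5/2}\prod_{p>3}(1-3/p)^{-1} e^{-3/p}$. Hence
\[
F(z) \;=\; \frac{\eta}{1-z/2} \;+\; \frac{\eta_3}{1-z/3} \;+\; R(z),
\]
with $R$ holomorphic on the disk $|z| < 5$ (the next prime).

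Finally, extracting coefficients, the first two summands contribute $\eta\,2^{-k}$ and $\eta_3\,3^{-k}$ exactly, while Cauchy's estimate on the circle $|z| = 5 - \varepsilon$ gives the $k$-th coefficient of $R(z)$ as $O((5-\varepsilon)^{-k})$. Combining yields $c_k = \eta\,2^{-k} + \eta_3\,3^{-k} + O(5^{-k}) = \eta\,2^{-k} + O(3^{-k})$, as required. The main point requiring care is justifying the passage from the formal ODE identity to the analytic/meromorphic identity $F(z) = \prod_p(1-z/p)^{-1} e^{-z/p}$: the product converges locally uniformly on $\mathbb{C} \setminus \{\text{primes}\}$ (via $\log(1-z/p)^{-1} - z/p = O(|z|^2/p^2)$), so it defines a meromorphic function that solves the same ODE with the same initial value, and hence agrees with $F$ as a power series near the origin; after this, the singularity analysis above is routine.
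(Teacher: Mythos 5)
Your proof is correct, and it takes a genuinely different route from the paper's. You derive the closed form $F(z)=\prod_p(1-z/p)^{-1}e^{-z/p}$ for the generating function (which the paper also obtains, essentially as the $p=2$ case of its $C_p(z)$ in \eqref{eq:Cexplicit}), but then you extract the asymptotics by standard singularity analysis: identifying the meromorphic continuation, peeling off the residues at the two smallest poles $z=2$ and $z=3$, and bounding the remainder via Cauchy's estimate on $|z|=5-\varepsilon$. (Your observation that you must remove the $z=3$ pole as well is the right call: subtracting only the $z=2$ contribution would leave a function unbounded near $|z|=3$, so Cauchy on that circle would only give $O((3-\varepsilon)^{-k})$, not the clean $O(3^{-k})$.) The paper instead deliberately avoids contour/residue arguments in this section: it runs an elementary, ``self-improving'' induction over primes via the dissected sequences $c_{k,q}$ (Lemmas \ref{lem:ckqrecur} and \ref{lma:ckbnd}), which yields the existence of the leading asymptotics by hand and only then uses the generating function at the pole to evaluate the constant $\eta_p$ (Proposition \ref{prop:etap}); that machinery also produces the full $h$-th order expansion over all primes simultaneously (Proposition \ref{prop:ckpnexpand}, Theorem 3.7). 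Your complex-analytic route is shorter and equally rigorous for the stated theorem, and it would extend to higher-order expansions just as readily by peeling off more poles; what the paper's approach buys is remaining elementary and combinatorial in the spirit of the rest of the article.
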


\section{Elementary combinatorial proof for $k$ fixed}

In this section we prove Theorem \ref{thm:main}. For $x,s>0$ define the (truncated) zeta functions
\begin{align*}
Z_k(s,x) = \sum_{\substack{\Omega(n)=k\\P^+(n)\le x}}n^{-s}, \qquad Z(s,x) = Z_1(s,x) = \sum_{p\le x}p^{-s}.
\end{align*}

We first express $Z_k(s,x)$ in terms of $Z(s,x)$.
\begin{proposition}\label{prop:Pkpart}
For each $k\ge1$ and any $x,s>0$ we have the identity
\begin{align}\label{eq:Pkpart}
Z_k(s,x) = \sum_{n_1 + 2n_2 +\cdots = k} \prod_{j\ge1} \frac{1}{n_j!}\big(Z(js,x)/j\big)^{n_j}
\end{align}
where the sum ranges over all partitions of $k$.
\end{proposition}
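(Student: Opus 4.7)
The plan is to prove the identity via a generating function argument in an auxiliary variable $y$ that tracks $\Omega(n)$, which packages the Euler product of $Z_k(s,x)$ (summed over $k$) into a form where the exponential formula applies verbatim.

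First I would introduce the bivariate generating series
\begin{align*}
F(y;s,x) \ := \ \prod_{p\le x}\bigl(1-y\,p^{-s}\bigr)^{-1}.
\end{align*}
Expanding each local factor as $\sum_{m\ge 0}(y p^{-s})^m$ and using unique factorization of integers with $P^+(n)\le x$, one gets
\begin{align*}
F(y;s,x) \ = \ \sum_{P^+(n)\le x} y^{\Omega(n)} n^{-s} \ = \ \sum_{k\ge 0} y^k\, Z_k(s,x),
\end{align*}
since $\Omega(n)$ is additive and totals the exponent in the chosen local factors. So $Z_k(s,x)$ is the coefficient of $y^k$ in $F(y;s,x)$.

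Next I would take the logarithm of the Euler product and expand via the Taylor series of $-\log(1-u)$:
\begin{align*}
\log F(y;s,x) \ = \ -\sum_{p\le x}\log\bigl(1-y p^{-s}\bigr) \ = \ \sum_{p\le x}\sum_{j\ge 1}\frac{y^j p^{-js}}{j} \ = \ \sum_{j\ge 1}\frac{y^j}{j}\,Z(js,x),
\end{align*}
where I have swapped the order of summation (justified since all terms are positive for $s>0$ and the sum over $p\le x$ is finite). Exponentiating and applying $\exp(A+B)=\exp(A)\exp(B)$ gives
\begin{align*}
F(y;s,x) \ = \ \prod_{j\ge 1}\exp\!\Bigl(\tfrac{y^j Z(js,x)}{j}\Bigr) \ = \ \prod_{j\ge 1}\sum_{n_j\ge 0}\frac{1}{n_j!}\Bigl(\frac{Z(js,x)}{j}\Bigr)^{n_j} y^{j n_j}.
\end{align*}

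Finally I would extract the coefficient of $y^k$. A term in the expansion is indexed by a sequence $(n_1,n_2,\dots)$ of nonnegative integers with finite support, and contributes to $y^k$ exactly when $\sum_j j\, n_j = k$, i.e.\ when $(n_j)$ encodes a partition of $k$ into $n_j$ parts of size $j$. Summing these contributions yields
\begin{align*}
Z_k(s,x) \ = \ [y^k]\, F(y;s,x) \ = \ \sum_{n_1+2n_2+\cdots=k} \prod_{j\ge 1}\frac{1}{n_j!}\Bigl(\frac{Z(js,x)}{j}\Bigr)^{n_j},
\end{align*}
which is exactly \eqref{eq:Pkpart}. There is no real obstacle here: the only minor point to be careful about is the formal manipulation of the series in $y$, but because only finitely many primes $p\le x$ appear and the coefficient of each $y^k$ is a finite sum of positive terms, all rearrangements are unconditionally valid; equivalently, one can regard the identity as one of formal power series in $y$ with coefficients in $\R[s,x]$.
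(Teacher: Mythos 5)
Your proposal is correct and follows essentially the same route as the paper: introduce the generating function $\sum_k Z_k(s,x)\,z^k$ as the Euler product $\prod_{p\le x}(1-z p^{-s})^{-1}$, take logarithms, regroup by $j$, exponentiate factor-by-factor, and compare coefficients of $z^k$. The only cosmetic difference is the name of the formal variable.
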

\begin{proof}
For any $x,s>0$ we have a formal power series identity in $z$,
\begin{align*}
\sum_{k\ge0}Z_k(s,x)z^k & = \sum_{P^+(n)\le x}\frac{z^{\Omega(n)}}{n^{s}} = \prod_{p\le x}\Big(1 + \frac{z}{p^s} + \frac{z^2}{p^{2s}}+\cdots\Big) =  \prod_{p\le x}\Big(1 - \frac{z}{p^s}\Big)^{-1}
\end{align*}
since the function $n\mapsto z^{\Omega(n)}/n^s$ is completely multiplicative. Thus expanding Taylor series,
\begin{align}\label{eq:formalpower}
\sum_{k\ge0}Z_k(s,x)z^k & = \exp\Big(-\sum_{p\le x}\log(1-zp^{-s})\Big)  = \exp\Big(\sum_{p\le x}\sum_{j\ge1}\frac{(zp^{-s})^j}{j}\Big) \nonumber\\
& = \exp\Big(\sum_{j\ge1}\frac{Z(js,x)}{j}z^j\Big) = \prod_{j\ge1}\exp\Big(\frac{Z(js,x)}{j}z^j\Big) \nonumber\\
& =\prod_{j\ge1}\sum_{n_j\ge0}\frac{1}{n_j!}\Big(\frac{Z(js,x)}{j}z^j\Big)^{n_j}
= \sum_{k\ge0}z^k \ \sum_{n_1 + 2n_2 +\cdots = k} \prod_{j\ge1} \frac{1}{n_j!}\big(Z(js,x)/j\big)^{n_j}.
\end{align}
Now \eqref{eq:Pkpart} follows by comparing the coefficients of $z^k$.
\end{proof}
\begin{remark}
This proposition generalizes \cite[Proposition 3.1]{JDAlmost}.
\end{remark}

Next, the recursion \eqref{eq:recursion} for $c_k$ leads to the explicit formula,
\begin{align}\label{eq:ckpartdef}
c_k = \sum_{2n_2 +3n_3\cdots = k}\prod_{j\ge2} \frac{1}{n_j!}\big(Z(j)/j\big)^{n_j}
\end{align}
by the following lemma, for the choices $A_1=0$ and $A_j=Z(j)$ when $j\ge2$.

\begin{lemma}\label{lem:general}
Given any sequence $(A_k)_{k=1}^{\infty}$, the sequence $(b_k)_{k=0}^{\infty}$ is given recursively by $b_0=1$ and $b_k=\frac{1}{k}\sum_{j=1}^k b_{k-j}A_j$, if and only if $(b_k)_{k=0}^{\infty}$ is given explicitly as
\begin{align*}
b_k = \sum_{n_1+2n_2+\cdots=k}\prod_{j\ge1}\frac{(A_j/j)^{n_j}}{n_j!}.
\end{align*}
\end{lemma}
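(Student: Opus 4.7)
The plan is to pass to formal generating functions, where the recursion becomes a first-order linear ODE whose solution is exactly the exponential form that produces the explicit partition sum. This is essentially the same mechanism at work in Proposition \ref{prop:Pkpart}, just stripped of its arithmetic content.

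Concretely, introduce the formal power series $B(z) = \sum_{k\ge 0} b_k z^k$ and $A(z) = \sum_{j\ge 1} A_j z^{j-1}$. The coefficient of $z^{k-1}$ in $B'(z) A(z)^{-1}\cdot A(z)$... let me just compute directly: $B'(z)$ has $z^{k-1}$-coefficient $k b_k$, while $B(z)A(z)$ has $z^{k-1}$-coefficient $\sum_{j=1}^k b_{k-j} A_j$. Thus the recursion $kb_k = \sum_{j=1}^k b_{k-j} A_j$ for all $k\ge 1$, together with $b_0 = 1$, is equivalent to the formal identity $B'(z) = B(z) A(z)$ with $B(0) = 1$. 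Since $B(0) = 1 \ne 0$, one may divide to get $(\log B(z))' = A(z)$, and integrating termwise (using $\log B(0) = 0$) gives the unique solution
\begin{align*}
B(z) = \exp\Big(\sum_{j\ge 1} \frac{A_j}{j} z^j\Big).
\end{align*}

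Now I expand the exponential in exactly the way done in the proof of Proposition \ref{prop:Pkpart}:
\begin{align*}
B(z) = \prod_{j\ge 1} \exp\Big(\frac{A_j}{j} z^j\Big) = \prod_{j\ge 1} \sum_{n_j \ge 0} \frac{(A_j/j)^{n_j}}{n_j!}\, z^{j n_j} = \sum_{k\ge 0} z^k \sum_{n_1 + 2n_2 + \cdots = k} \prod_{j\ge 1} \frac{(A_j/j)^{n_j}}{n_j!}.
\end{align*}
Comparing $z^k$-coefficients with $B(z) = \sum_k b_k z^k$ produces the explicit formula, and running the argument backwards (the explicit formula defines a series whose logarithmic derivative is $A(z)$, which reproduces the recursion) establishes the converse.

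There is really no serious obstacle here; the only thing worth being careful about is that we are working with formal power series in $z$ over the ring of expressions in $A_1, A_2, \ldots$, so that convergence is never an issue and termwise differentiation, multiplication, and composition with $\exp$ are all legitimate. In fact, the cleanest write-up may simply verify that the explicitly given $b_k$ satisfies the recursion by differentiating the exponential formula and matching coefficients, since uniqueness of a sequence determined by $b_0 = 1$ and a recursion of the given shape is immediate by induction on $k$.
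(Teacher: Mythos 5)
Your proof is correct, but it takes a genuinely different route from the paper. The paper proves Lemma \ref{lem:general} by a direct induction on $k$: assuming the partition-sum formula for $b_{k-r}$ with $r\ge 1$, it substitutes into $kb_k = \sum_{r=1}^{k} b_{k-r}A_r$, absorbs the factor $A_r$ into the $r$th slot of the partition product, reindexes so that the inner sum runs over partitions of $k$ with $n_r\ge 1$, and finally uses the identity $\sum_r rn_r = k$ to cancel the factor $k$. You instead pass to the formal generating function $B(z)=\sum_k b_k z^k$, observe that the recursion (with $b_0=1$) is equivalent to the formal ODE $B'(z)=B(z)A(z)$, $B(0)=1$, solve it as $B(z)=\exp\bigl(\sum_j A_j z^j/j\bigr)$, and read off coefficients by expanding the exponential as a product over $j$. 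This is essentially the same exponential expansion the paper carries out in Proposition \ref{prop:Pkpart}, just stripped of the arithmetic; in effect your argument unifies Proposition \ref{prop:Pkpart} and Lemma \ref{lem:general} under one generating-function computation, and it would let one derive the formula \eqref{eq:ckpartdef} for $c_k$ directly without a separate inductive lemma. What the paper's inductive proof buys in exchange is that it is entirely elementary and self-contained, involving no formal calculus (differentiation, logarithm, exponential) on power series --- a stylistic choice in keeping with the paper's emphasis on elementary methods. One small point in your write-up worth tightening: the stray phrase ``$B'(z)A(z)^{-1}\cdot A(z)$'' before you ``just compute directly'' should simply be deleted; and for the converse it is cleanest to say, as you do at the end, that both conditions are equivalent to the single formal identity $B'=BA$ with $B(0)=1$, so the iff is immediate rather than requiring a separate backwards argument.
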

Note the (unique) partition of $k=0$ has $n_j=0$ for all $j\ge1$, so $b_0=\prod_{j\ge1}(A_j/j)^0/0! = 1$.
\begin{proof}
We prove the forward direction by induction on $k$ (the reverse direction is similar). 
For $k=1$, we have $b_1 = b_0A_1 = A_1$. Then assuming the claim for each $r<k$,
\begin{align*}
kb_k & =\sum_{r=1}^k b_{k-r}A_r = \sum_{r=1}^k A_r\sum_{n_1+\cdots=k-r}\prod_{j\ge1}\frac{(A_j/j)^{n_j}}{n_j!}\\
&= \sum_{r=1}^k\sum_{n_1+\cdots=k-r} \frac{A_r^{n_r+1}}{r^{n_r}n_r!}\prod_{j\neq r}\frac{(A_j/j)^{n_j}}{n_j!}
= \sum_{r=1}^k\sum_{\substack{n_1+\cdots=k\\n_r\ge1}}rn_r \,\prod_{j\ge1}\frac{(A_j/j)^{n_j}}{n_j!}\\
&= \sum_{n_1+\cdots=k}\prod_{j\ge1}\frac{(A_j/j)^{n_j}}{n_j!}\sum_{\substack{1\le r\le k\\n_r\ge1}}rn_r = k\sum_{n_1+\cdots=k}\prod_{j\ge1}\frac{(A_j/j)^{n_j}}{n_j!}
\end{align*}
In the last step, we dropped the condition $n_r\ge 1$ (since $rn_r=0$ for $n_r=0$) which gives $\sum_{r=1}^k rn_r=k$. Dividing by $k$ completes the induction.
\end{proof}

Now equipped with Proposition \ref{prop:Pkpart} and formula \eqref{eq:ckpartdef} for $c_k$, we now prove Theorem \ref{thm:main}.

\begin{proof}[Proof of Theorem \ref{thm:main}]
For $Z(j,x)$ with $j\ge2$, we trivially bound $\sum_{p> x}p^{-j}$ by $x^{2-j}\sum_{n>x}n^{-2} = O(x^{1-j})$, which gives
\begin{align*}
Z(j,x) := \sum_{p\le x}p^{-j} = Z(j) - \sum_{p> x}p^{-j} \ = \ Z(j) + O(x^{1-j}) \qquad \text{for }j\ge2.
\end{align*}
Thus plugging into the identity for $Z_k(1,x)$, Proposition \ref{prop:Pkpart} with $s=1$ gives
\begin{align*}
Z_k(1,x) \ & = \  \sum_{n_1 + 2n_2 +\cdots = k} \frac{Z(1,x)^{n_1}}{n_1!}\prod_{j\ge2} \frac{1}{n_j!}\Big(\frac{Z(j)+O(x^{1-j})}{j}\Big)^{n_j}
\end{align*}
For any partition of $k$, the binomial theorem implies $\prod_{j\ge2} \frac{1}{n_j!}\Big([Z(j)+O(x^{1-j})]/j\Big)^{n_j}$ equals $\prod_{j\ge2} \frac{1}{n_j!}(Z(j)/j)^{n_j}$ at negligible cost $O_k(1/x)$. Thus
\begin{align}
Z_k(1,x) & = \ \sum_{n_1 + 2n_2 +\cdots = k} \frac{Z(1,x)^{n_1}}{n_1!}\prod_{j\ge2} \frac{1}{n_j!}\big(Z(j)/j\big)^{n_j} \ + \ O_k(\tfrac{Z(1,x)^k}{x}).
\end{align}
Then for $Z(1,x)$, we recall Mertens' $2^\text{nd}$ theorem
\begin{align}
Z(1,x) := \sum_{p\le x}\frac{1}{p} \ = \ \log_2 x + \beta + E(x), \qquad \text{with } E(x) = O\big(\tfrac{1}{\log x}\big)
\end{align}
so plugging in above gives
\begin{align}
Z_k(1,x) \ & = \ \sum_{n_1 + 2n_2 +\cdots = k} \frac{1}{n_1!}\Big(\log_2 x + \beta + E(x)\Big)^{n_1}\prod_{j\ge2} \frac{1}{n_j!}\big(Z(j)/j\big)^{n_j} \ + \ O_k(\tfrac{(\log_2 x)^k}{x})\nonumber\\
& = \ \sum_{n_1=0}^k \frac{1}{n_1!}\big(\log_2 x + \beta\big)^{n_1}\sum_{2n_2 +\cdots = k-n_1}\prod_{j\ge2} \frac{1}{n_j!}\big(Z(j)/j\big)^{n_j} \ + \ O_k\big(E(x)\,(\log_2 x)^{k-1}\big) 
\end{align}
again by the binomial theorem. Here we used $\prod_{j\ge2} \frac{1}{n_j!}(Z(j)/j)^{n_j}  = O_k(1)$. 

Now recalling \eqref{eq:ckpartdef} and $E(x)=  O(1/\log x)$ completes the proof of Theorem \ref{thm:main}.
\end{proof}

From here, we may ``dissect'' Mertens' $3^\text{rd}$ theorem. Indeed by \eqref{eq:dissect},
\begin{align*}
\prod_{p\le x}\big(1-\tfrac{1}{p}\big)^{-1} &= \sum_{P^+(n)\le x}\frac{1}{n} = \sum_{k\ge0} Z_k(1,x)
\end{align*}
and using the asymptotic formula for $Z_k(1,x)$ from Theorem \ref{thm:main},
\begin{align}\label{eq:dissectterms}
\sum_{k\ge0} \sum_{j=0}^k\frac{c_{k-j}}{j!}\big(\log_2 x + \beta\big)^j
& = \sum_{j\ge0}\frac{1}{j!}\big(\log_2 x + \beta\big)^j \sum_{k\ge j}c_{k-j}
 = e^\beta \log x\sum_{m\ge0}c_m \ = \ e^\gamma \log x,
\end{align}
as desired, provided $\sum_{m\ge0}c_m=e^{\gamma-\beta}$. This follows in turn by \eqref{eq:ckpartdef},
\begin{align*}
\sum_{m\ge0}c_m & = \sum_{m\ge0}\sum_{2n_2 +3n_3\cdots = m}\prod_{j\ge2} \frac{1}{n_j!}\big(Z(j)/j\big)^{n_j} = \prod_{j\ge2}\sum_{n_j\ge0}\frac{1}{n_j!}\big(Z(j)/j\big)^{n_j}\\
& = \prod_{j\ge2}\exp\big(Z(j)/j\big) = \exp\Big(\sum_{j\ge2}\frac{Z(j)}{j}\Big) \ = \ e^{\gamma-\beta}
\end{align*}
recalling \eqref{eq:beta}. This shows the claim.

\section{Combinatorial proof of asymptotics for coefficients $c_k$}
In this section we prove a strengthening of Theorem \ref{thm:ck2k}. To this, we first rephrase the recursion \eqref{eq:recursion} for $c_k$.

Let $A_1 = 0$ and $A_k = \sum_p p^{-k}$ for $k\ge2$. Then $c_k$ is recursively defined by $c_0=1$ and
\begin{align}\label{eq:ckrecur}
kc_k = \sum_{j=1}^k c_{k-j}A_j.
\end{align}

Consider the following induced sequences $A_{k,q},c_{k,q}$ for each prime $q$: let $A_{k,2} = A_k$, $c_{k,2} = c_k$; and if $p$ is the prime preceding $q>2$, let
\begin{align}
A_{k,q} &= A_{k,p}-p^{-k} \qquad \qquad \text{for }k\ge1, \label{eq:defPkq}\\
c_{k,q} &= c_{k,p}-p^{-1}c_{k-1,p} \qquad\text{for }k\ge1, \qquad\text{and} \quad c_{0,q}=c_0. \label{eq:defckq}
\end{align}
Explicitly we have
\begin{align}\label{eq:Pkqexplicit}
A_{k,q} = \sum_{r\ge q}r^{-k}\qquad\text{for }k\ge2, \qquad\text{and} \quad
A_{1,q}=-\sum_{p< q}p^{-1}.
\end{align}

\begin{lemma}\label{lem:ckqrecur}
For each prime $q$ and $k\ge0$, we have the recursion
\begin{align}\label{eq:ckqrecur}
kc_{k,q} &= \sum_{j=1}^k c_{k-j,q} A_{j,q}.
\end{align}
\end{lemma}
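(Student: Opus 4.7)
The plan is to prove the recursion by induction on the prime $q$. The base case $q=2$ is immediate: by construction $c_{k,2}=c_k$ and $A_{k,2}=A_k$, so \eqref{eq:ckqrecur} collapses to the defining recursion \eqref{eq:ckrecur}.

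For the inductive step, let $p$ denote the prime preceding $q$, and assume $kc_{k,p} = \sum_{j=1}^k c_{k-j,p}A_{j,p}$ for every $k\ge 0$ (with the convention $c_{-1,p}=0$, so the $k=0$ instance is vacuous). Observing that $c_{k-j,q}=c_{k-j,p}-p^{-1}c_{k-j-1,p}$ holds uniformly for $0\le j\le k$ (reducing to $c_{0,q}=c_{0,p}$ when $j=k$), I would plug the definitions \eqref{eq:defPkq}, \eqref{eq:defckq} into the right-hand side of \eqref{eq:ckqrecur} and expand
$$\sum_{j=1}^k c_{k-j,q}A_{j,q} \;=\; \sum_{j=1}^k\bigl(c_{k-j,p}-p^{-1}c_{k-j-1,p}\bigr)\bigl(A_{j,p}-p^{-j}\bigr)$$
into four sums. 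Two of them become $kc_{k,p}$ and $-p^{-1}(k-1)c_{k-1,p}$ by the inductive hypothesis at indices $k$ and $k-1$, respectively. The remaining two sums, involving $c_{k-j,p}\,p^{-j}$ and $c_{k-j-1,p}\,p^{-j-1}$, telescope after the shift $j\mapsto j+1$ to contribute just $-c_{k-1,p}/p$. Assembling,
$$\sum_{j=1}^k c_{k-j,q}A_{j,q} \;=\; kc_{k,p}-kp^{-1}c_{k-1,p} \;=\; kc_{k,q},$$
which is exactly \eqref{eq:ckqrecur}.

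The only delicate aspect is the careful bookkeeping of the boundary term at $j=k$ and the telescoping of the $p^{-j}$ sums; neither should pose any real obstacle. As a conceptual alternative, one can package everything with generating functions: setting $f_q(z)=\sum_{k\ge 0}c_{k,q}z^k$ and $\mathcal{A}_q(z)=\sum_{j\ge 1}A_{j,q}z^j$, the definitions give $f_q(z)=(1-z/p)\,f_p(z)$ and $(1-z/p)\,\mathcal{A}_q(z)=(1-z/p)\,\mathcal{A}_p(z)-z/p$, and one checks directly that the differential identity $zf_p'=f_p\mathcal{A}_p$ (i.e.\ the $p$-instance of the recursion) propagates to $zf_q'=f_q\mathcal{A}_q$, which is equivalent to \eqref{eq:ckqrecur}. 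Either route finishes the induction.
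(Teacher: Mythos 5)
Your proof is correct and takes essentially the same approach as the paper: induction on the prime $q$, substitution of the definitions \eqref{eq:defPkq} and \eqref{eq:defckq}, application of the inductive hypothesis at indices $k$ and $k-1$, and a telescoping sum contributing $-c_{k-1,p}/p$. The only cosmetic difference is direction: you expand the right-hand side $\sum_{j=1}^k c_{k-j,q}A_{j,q}$ into four sums and collect, whereas the paper starts from the difference $kc_{k,p}-(k-1)p^{-1}c_{k-1,p}$ of the two inductive-hypothesis instances and works toward the target; the algebra is identical. Your bookkeeping at the boundary term $j=k$ via the convention $c_{-1,p}=0$ matches the paper's separate treatment of the $c_{0,p}A_{k,p}$ term. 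The closing generating-function remark, while not developed, correctly captures the same structure in a tidier package and would also work.
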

\begin{proof}
We proceed by induction on the prime $q$. The base case $q=2$ holds by \eqref{eq:ckrecur}.

Now assume \eqref{eq:ckqrecur} for $p<q$. The difference of recursions \eqref{eq:ckqrecur} for $c_{k,p}$ and $p^{-1}\cdot c_{p,k-1}$ is
\begin{align*}
kc_{k,p} - (k-1)p^{-1}c_{k-1,p} & = \sum_{j=1}^{k-1}(c_{k-j,p}-p^{-1}c_{k-j-1,p})A_{j,p} \ + \ c_{0,p}A_{k,p}\\
& = \sum_{j=1}^{k-1}(c_{k-j,p}-p^{-1}c_{k-j-1,p})(p^{-j} + A_{j,q}) \ + \ c_{0,p}(p^{-k} + A_{k,q})\\
& = \sum_{j=1}^{k-1}(p^{-j} c_{k-j,p}-p^{-j-1}c_{k-j-1,p}) + c_{0,p}p^{-k} \ + \ \sum_{j=1}^{k-1}c_{k-j,q}A_{j,q} + c_{0,q}A_{k,q}\\
& = p^{-1}c_{k-1,p} \ + \ \sum_{j=1}^k c_{k-j,q}A_{j,q}
\end{align*}
using \eqref{eq:defPkq}, \eqref{eq:defckq} and telescoping series. Subtracting $p^{-1}c_{k-1,p}$ gives
\begin{align*}
kc_{k,q} = k(c_{k,p} - p^{-1}c_{k-1,p}) = \sum_{j=1}^k c_{k-j,q} A_{j,q}.
\end{align*}
\end{proof}

Note Lemmas \ref{lem:general} and \ref{lem:ckqrecur} together imply
\begin{align}\label{eq:ckqexplicit}
c_{k,q} &= \sum_{n_1+2n_2\cdots=k}\prod_{j\ge1}\frac{(A_{j,q}/j)^{n_j}}{n_j!}
\end{align}
for each prime $q$, $k\ge1$.

Now with the recursion in hand, we bound the induced sequence $c_{k,q}$.

\begin{lemma}\label{lma:ckbnd}
For each prime $q$, we have $c_{k,q} \, \ll_q \, q^{-k}$ as $k\to\infty$.
\end{lemma}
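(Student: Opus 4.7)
The plan is to study the ordinary generating function $F_q(z) := \sum_{k \ge 0} c_{k,q}\, z^k$ via its unique dominant singularity at $z = q$. First, multiplying the recursion \eqref{eq:ckqrecur} by $z^{k-1}$ and summing over $k \ge 1$ produces the identity $F_q'(z) = F_q(z) \sum_{j \ge 1} A_{j,q}\, z^{j-1}$. Integrating from $0$ with $F_q(0) = 1$, and invoking the explicit formulas \eqref{eq:Pkqexplicit} together with the Taylor series $-\log(1-w) - w = \sum_{j \ge 2} w^j/j$, I obtain the closed form
\begin{align*}
F_q(z) \;=\; \exp(A_{1,q}\, z) \prod_{p \ge q} (1 - z/p)^{-1} e^{-z/p}.
\end{align*}
The product converges uniformly on compact subsets of $\C$ avoiding $\{p \text{ prime}: p \ge q\}$, since each local factor equals $1 + O(|z|^2/p^2)$ and $\sum_p p^{-2} < \infty$; hence $F_q$ is meromorphic on $\C$ with simple poles at exactly the primes $p \ge q$.

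Next, let $q'$ denote the prime immediately following $q$, and split off the pole at $z = q$ by writing $F_q(z) = (1 - z/q)^{-1} H_q(z)$, where
\begin{align*}
H_q(z) \;:=\; \exp\big((A_{1,q} - 1/q)\, z\big) \prod_{p > q} (1 - z/p)^{-1} e^{-z/p}
\end{align*}
is holomorphic on the disk $|z| < q'$. Fix any $r \in (q, q')$. By Cauchy's coefficient bound, the Taylor coefficients $(h_j)_{j \ge 0}$ of $H_q$ satisfy $|h_j| \ll_q r^{-j}$. Convolving with the geometric series $(1 - z/q)^{-1} = \sum_{\ell \ge 0} (z/q)^\ell$ and reading off coefficients yields
\begin{align*}
c_{k,q} \;=\; q^{-k} \sum_{j=0}^k h_j\, q^j \;=\; H_q(q)\, q^{-k} + O_q(r^{-k}),
\end{align*}
since $\sum_j h_j q^j$ converges absolutely to $H_q(q)$, with tail of geometric size $O_q((q/r)^k)$. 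Because $r > q$, this gives $c_{k,q} \ll_q q^{-k}$, as claimed. As a pleasant byproduct, $H_2(2) = e^{-1}\prod_{p > 2}(1 - 2/p)^{-1} e^{-2/p} = \eta$, pre-figuring the sharper Theorem \ref{thm:ck2k}.

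I expect the only technical obstacle to be legitimizing the passage from the formal-power-series manipulation to an honest equality of analytic functions on a disk around the origin. This is standard: the analytic product and the formal series both satisfy the same first-order ODE with the same initial condition $F_q(0) = 1$, so they must coincide as formal series, and the product is genuinely analytic on $\{|z| < q\}$ by the elementary bound $|{-\log(1 - w)} - w| \le |w|^2$ for $|w| \le 1/2$ together with convergence of $\sum_p p^{-2}$. Everything downstream is routine extraction of Taylor coefficients.
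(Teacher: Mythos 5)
Your proof is correct, and it takes a genuinely different route from the paper. The paper argues purely from the recursion \eqref{eq:ckqrecur}: it sets $m_k = \max_{j\le k} q^j|c_{j,q}|$, exploits the fact that $\sum_{1\le j\le n} q^j A_{j,q} = n + O_q(1)$, splits the convolution at $j=k/2$ to get $m_k \le m_{k/2}(1+O(1/k))$, and closes by induction along dyadic indices — a deliberately elementary argument in keeping with the section title ``Combinatorial proof.'' You instead solve the recursion in closed form via the generating function $F_q(z) = e^{A_{1,q}z}\prod_{p\ge q}(1-z/p)^{-1}e^{-z/p}$, peel off the pole at $z=q$, and extract the coefficient bound from Cauchy's estimate for the holomorphic factor $H_q$. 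Interestingly, the paper itself writes down exactly this product formula in \eqref{eq:Cexplicit} (proof of Proposition \ref{prop:etap}), but only \emph{after} the bound is in hand, using it to identify the limiting constant $\eta_p$. Your route short-circuits that order: the singularity analysis delivers not only Lemma \ref{lma:ckbnd} but the refined asymptotic $c_{k,q} = H_q(q)\,q^{-k} + O_q(r^{-k})$ for any $r \in (q,q')$ in one stroke, essentially subsuming \eqref{eq:etap} and Theorem \ref{thm:ck2k} and bypassing the inductive bootstrap of Proposition \ref{prop:ckpnexpand}. The trade-off is that the paper's proof stays within elementary estimates and the recursion alone, whereas yours leans on meromorphic continuation and Cauchy's bound — cleaner and quantitatively stronger, but less in the spirit of the paper's stated ``elementary combinatorial'' program. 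Your identification of formal and analytic series via the shared first-order ODE with $F_q(0)=1$ is valid; one could equally cite \eqref{eq:ckqexplicit}, which already exhibits $c_{k,q}$ as the exact Taylor coefficient of $\exp(\sum_{j\ge1} A_{j,q}z^j/j)$.
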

\begin{proof}
Fix $q$ and let $m_k = \max_{j\le k} q^j|c_{j,q}|$. We shall prove $m_k \ll_q 1$, and it suffices to show this along a subsequence, since $m_k$ is itself a non-decreasing sequence. Namely, we consider the indices $k$ for which $m_k = q^k|c_{k,q}|$.


Recalling \eqref{eq:Pkqexplicit}, we have for all $n\ge1$
\begin{align*}
\sum_{1\le j\le n}q^j\,A_{j,q} = qA_{1,q} + (n-1) + \sum_{2\le j\le n}\sum_{r>q}(q/r)^j = n + O_q(1),
\end{align*}
by summing the geometric series, and so the recursion \eqref{eq:ckqrecur} gives
\begin{align*}
kq^k\,|c_{k,q}| = \bigg|\sum_{j=1}^k q^{k-j}\,c_{k-j,q}\cdot q^j\,A_{j,q}\bigg| \ & \le \ m_{k/2}\bigg|\sum_{k/2< j\le k}q^j\,A_{j,q}\bigg| \ + \ m_k\bigg|\sum_{1\le j\le k/2}q^j\,A_{j,q}\bigg|\\
& = \ m_{k/2}\big(k/2+O(1)\big) \ + \ m_k\big(k/2+O(1)\big).
\end{align*}
And by our choice of $k$, we have $m_k = q^k|c_{k,q}|$ and so
\begin{align}
m_k \le  m_{k/2}\big(1+O(1/k)\big).
\end{align}
Hence by induction on $k$, we conclude
\begin{align*}
m_k \ll m_1\prod_{2^i\le k}(1+O(2^{-i})) \ll \exp\sum_{2^i\le k}O(2^{-i}) \ll 1.
\end{align*}
\end{proof}

Since Lemma \ref{lma:ckbnd} holds for every prime $q$ and the sequences $c_{k,q}$ are defined inductively on primes, Lemma \ref{lma:ckbnd} is self-improving. Indeed, for each pair of consecutive primes $p<q$,
\begin{align*}
c_{k,p} - p^{-1}c_{k-1,p} = c_{k,q} = O(q^{-k}).
\end{align*}
In other words, multiplying above by $p^k$ the modified sequence $c_{k,p}':=c_{k,p}p^k$ satisfies $c_{k,p}' - c_{k-1,p}' = O((p/q)^k)$, so $(c'_{k,p})_{k\ge1}$ is a Cauchy sequence for each prime $p$. Hence the limit
\begin{align*}
\eta_p:=\lim_{k\to\infty} c'_{k,p}=\lim_{k\to\infty} c_{k,p}p^k
\end{align*}
exists with $c'_{k,p} = \eta_p +O((p/q)^k)$. That is,
\begin{align}\label{eq:etap}
c_{k,p} = \eta_p\,p^{-k} + O_p(q^{-k}) \qquad\text{for each prime }p.
\end{align}

To summarize, we expanded the definition of $c_{k,q}$ and used a zeroth order expansion for each prime (Lemma \ref{lma:ckbnd}) to prove a first order expansion for every prime simultaneously.

Continuing in this way, we obtain a $h$th order expansion for $c_{k,q}$ by induction on the order $h\ge1$, at each step proving the respective expansion for every prime simultaneously.

\begin{proposition}\label{prop:ckpnexpand}
For any $h\ge1$, we have
\begin{align}\label{eq:ckpn}
c_{k,p_n} \ = \ \sum_{l=0}^{h-1} \eta_{p_{n+l}}^{(n)}\,p_{n+l}^{-k} \ + \ O_{n,h}(p_{n+h}^{-k}),\qquad\text{where}\quad \eta_{p_{n+l}}^{(n)} = \eta_{p_{n+l}}/\prod_{i=0}^{l-1}\big(1- \tfrac{p_{n+l}}{p_{n+i}}\big)
\end{align}
for all $n$ as $k\to\infty$. Here $p_n$ denotes the $n$th prime, and $\eta_p=\lim_{k\to\infty} c_{k,p}p^k$ as in \eqref{eq:etap}.
\end{proposition}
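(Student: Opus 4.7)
The plan is induction on $h \geq 1$. The base case $h = 1$ is exactly \eqref{eq:etap}, since $\eta^{(n)}_{p_n} = \eta_{p_n}$ (empty product). For the inductive step, assume \eqref{eq:ckpn} holds at depth $h$ for every $n$, and define the remainder $R_n(k) := c_{k, p_n} - \sum_{l=0}^{h-1}\eta^{(n)}_{p_{n+l}} p_{n+l}^{-k}$, so that $R_n(k) = O(p_{n+h}^{-k})$. The goal is to refine this to $R_n(k) = \eta^{(n)}_{p_{n+h}} p_{n+h}^{-k} + O(p_{n+h+1}^{-k})$, which promotes the expansion to depth $h+1$.

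The bridge between values of $n$ is the algebraic identity $\eta^{(n)}_{p_{n+l}}(1 - p_{n+l}/p_n) = \eta^{(n+1)}_{p_{n+l}}$ for $l \geq 1$, immediate from the product formula for $\eta^{(n)}$, while the $l=0$ term is annihilated since $1 - p_n/p_n = 0$. Substituting the depth-$h$ expansion into the defining recursion $c_{k, p_{n+1}} = c_{k, p_n} - p_n^{-1} c_{k-1, p_n}$ from \eqref{eq:defckq} and applying this identity (which shifts the summation index down by one), one obtains
\begin{align*}
c_{k, p_{n+1}} = \sum_{l=0}^{h-2}\eta^{(n+1)}_{p_{n+1+l}} p_{n+1+l}^{-k} + R_n(k) - p_n^{-1} R_n(k-1).
\end{align*}
Equating with the depth-$h$ expansion of $c_{k, p_{n+1}}$ from the inductive hypothesis at $n+1$, the common sum cancels and only the $l = h-1$ contribution remains, leaving
\begin{align*}
R_n(k) - p_n^{-1} R_n(k-1) = \eta^{(n+1)}_{p_{n+h}} p_{n+h}^{-k} + O(p_{n+h+1}^{-k}).
\end{align*}

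Rescaling $\tau_k := p_{n+h}^k R_n(k)$, which is bounded by the inductive hypothesis, and setting $r := p_{n+h}/p_n > 1$ and $s := p_{n+h}/p_{n+h+1} < 1$, this becomes the linear recurrence
\begin{align*}
\tau_k - r \tau_{k-1} = \eta^{(n+1)}_{p_{n+h}} + O(s^k).
\end{align*}
Since $r > 1$, at most one bounded solution exists (any two differ by $A r^k$). The algebraic identity $\eta^{(n)}_{p_{n+h}}(1 - r) = \eta^{(n+1)}_{p_{n+h}}$ identifies $\eta^{(n)}_{p_{n+h}}$ as the constant particular solution, and centering $\sigma_k := \tau_k - \eta^{(n)}_{p_{n+h}}$ gives $\sigma_k - r \sigma_{k-1} = E_k$ with $E_k = O(s^k)$ and $\sigma_k$ bounded. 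The unique bounded solution is the backward sum $\sigma_k = -\sum_{j > k} r^{k-j} E_j = O(s^k)$ (since $s/r < 1$). Unrolling yields $R_n(k) = \eta^{(n)}_{p_{n+h}} p_{n+h}^{-k} + O(p_{n+h+1}^{-k})$, closing the induction.

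The main obstacle is spotting and applying the algebraic identity $\eta^{(n)}_{p_{n+h}}(1 - p_{n+h}/p_n) = \eta^{(n+1)}_{p_{n+h}}$: this is exactly what causes the constant produced by the inhomogeneous recurrence to coincide with the prescribed $\eta^{(n)}_{p_{n+h}}$, carrying the full normalization $\prod_{i=0}^{h-1}(1 - p_{n+h}/p_{n+i})^{-1}$. Everything else reduces to the standard bounded-solution argument for a linear recurrence with expanding factor and geometrically decaying forcing.
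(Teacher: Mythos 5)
Your proof is correct and takes essentially the same approach as the paper: induction on $h$, plugging the depth-$h$ expansion into the recursion $c_{k,p_{n+1}} = c_{k,p_n} - p_n^{-1}c_{k-1,p_n}$, using the identity $\eta^{(n+1)}_{p_{n+l}} = \eta^{(n)}_{p_{n+l}}(1-p_{n+l}/p_n)$, and then solving the resulting first-order difference equation for the remainder. The only cosmetic difference is in packaging: the paper absorbs the target term $\eta^{(n)}_{p_{n+h}}p_{n+h}^{-k}$ into the error $E_{k,n,h}$ from the start, reducing to showing a Cauchy-sequence limit vanishes (mirroring the argument used for \eqref{eq:etap}), whereas you leave the remainder $R_n(k)$ without that term and let the constant particular solution of the inhomogeneous recurrence produce $\eta^{(n)}_{p_{n+h}}$; the two formulations are equivalent.
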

\begin{proof}
We proceed by induction on $h$. The base $h=1$ holds for all $n$ by \eqref{eq:etap}, since $\eta_{p_n}^{(n)}=\eta_{p_n}$.  Now assume \eqref{eq:ckpn} holds with $h$ for every $n$, and write $c_{k,p_n}  =  \sum_{l=0}^{h} \eta_{p_{n+l}}^{(n)}\,p_{n+l}^{-k} \ + \ E_{k,n,h}$. By assumption $E_{k,n,h} \ll p_{n+h}^{-k}$, and we aim to show $E_{k,n,h} \ll p_{n+1+h}^{-k}$. 

By \eqref{eq:defckq} and the induction hypothesis \eqref{eq:ckpn} for $c_{k,p_{n+1}}$,
\begin{align*}
c_{k,p_{n+1}} & = c_{k,p_n}-p_n^{-1}c_{k-1,p_n}\\
\sum_{l=0}^{h-1} \eta_{p_{n+1+l}}^{(n+1)}\,p_{n+1+l}^{-k} \ + \ O(p_{n+1+h}^{-k})
& = \sum_{l=0}^h \eta_{p_{n+l}}^{(n)}(1 - \tfrac{p_{n+l}}{p_n})p_{n+l}^{-k} \ + \ (E_{k,n,h}-p_n^{-1}E_{k-1,n,h})
\end{align*}
Note by definition $\eta_{p_{n+l}}^{(n+1)}=\eta_{p_{n+l}}^{(n)}(1 - \tfrac{p_{n+l}}{p_n})$, and so the above simplifies as
\begin{align*}
O(p_{n+1+h}^{-k}) = E_{k,n,h}-p_n^{-1}E_{k-1,n,h}.
\end{align*}
Thus, similarly as with \eqref{eq:etap}, the modified sequence $E'_{k,n,h}:=E_{k,n,h}p_n^k$ converges as $k\to\infty$ to some limit $\ell_{n,h}$, with $E'_{k,n,h} = \ell_{n,h}+ O((p_n/p_{n+1+h})^{k})$. That is,
\begin{align*}
E_{k,n,h} = \ell_{n,h}\,p_n^{-k}+ O(p_{n+1+h}^{-k})
\end{align*}
On the other hand, $E_{k,n,h} \ll p_{n+h}^{-k}$ forces $\ell_{n,h}=0$. Hence $E_{k,n,h} = O(p_{n+1+h}^{-k})$ as desired.
\end{proof}

Next, we determine the expansion coefficients $\eta_p$ from \eqref{eq:etap}.
\begin{proposition}\label{prop:etap}
For any prime $p$, the coefficient $\eta_p=\lim_{k\to\infty} c_{k,p}p^k$ equals
\begin{align}
\eta_p = e^{-\sum_{q\le p}p/q}\prod_{q>p}(1-\tfrac{p}{q})^{-1}e^{-p/q}.
\end{align}
\end{proposition}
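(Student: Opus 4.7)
The plan is to compute the generating function $F_p(z):=\sum_{k\ge 0}c_{k,p}z^k$ in closed form and then extract $\eta_p$ from its pole at $z=p$. Starting from the explicit formula \eqref{eq:ckqexplicit} for $c_{k,p}$, I would repeat the power-series manipulation used in the proof of Proposition \ref{prop:Pkpart} to obtain, as a formal power series identity (and analytically for $|z|<p$),
\begin{align*}
F_p(z) \ = \ \prod_{j\ge 1}\sum_{n_j\ge 0}\frac{(A_{j,p}z^j/j)^{n_j}}{n_j!} \ = \ \exp\Big(\sum_{j\ge 1}\frac{A_{j,p}}{j}z^j\Big).
\end{align*}
Substituting $A_{1,p}=-\sum_{q<p}q^{-1}$ and $A_{j,p}=\sum_{r\ge p}r^{-j}$ for $j\ge 2$ from \eqref{eq:Pkqexplicit}, and swapping the sums over $j$ and $r$ (justified by absolute convergence for $|z|<p$), the exponent rewrites as
\begin{align*}
-z\sum_{q<p}\frac{1}{q} \ + \ \sum_{r\ge p}\sum_{j\ge 2}\frac{(z/r)^j}{j} \ = \ -z\sum_{q<p}\frac{1}{q} \ + \ \sum_{r\ge p}\Big(-\log(1-z/r)-z/r\Big),
\end{align*}
using the Taylor series $\sum_{j\ge 2}x^j/j=-\log(1-x)-x$.

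Exponentiating gives the closed form
\begin{align*}
F_p(z) \ = \ e^{-z\sum_{q<p}1/q}\prod_{r\ge p}(1-z/r)^{-1}e^{-z/r},
\end{align*}
and since $\log[(1-z/r)^{-1}e^{-z/r}]=O(z^2/r^2)$ as $r\to\infty$, the product converges absolutely on compacta of $\C$ missing the primes $\ge p$, presenting the right side as a meromorphic function of $z$ with simple poles exactly at these primes.

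Finally, I would extract $\eta_p$ as the residue at $z=p$. By \eqref{eq:etap} we have $c_{k,p}=\eta_p p^{-k}+O(q_{+}^{-k})$ with $q_+$ the next prime after $p$, so that $F_p(z)-\eta_p/(1-z/p)$ extends analytically to $|z|<q_+$, and therefore
\begin{align*}
\eta_p \ = \ \lim_{z\to p}(1-z/p)F_p(z).
\end{align*}
Multiplying the closed form by $(1-z/p)$ cancels the $r=p$ factor; evaluating at $z=p$ yields
\begin{align*}
\eta_p \ = \ e^{-p\sum_{q<p}1/q}\cdot e^{-1}\prod_{q>p}(1-p/q)^{-1}e^{-p/q} \ = \ e^{-\sum_{q\le p}p/q}\prod_{q>p}(1-p/q)^{-1}e^{-p/q},
\end{align*}
where the last equality absorbs $e^{-1}=e^{-p/p}$ into the exponent. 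Given \eqref{eq:etap} in hand, there is really no serious obstacle here; the only care needed is in justifying the interchange of summation in the exponent and the meromorphic continuation of $F_p$, both of which follow routinely from the absolute convergence of $\sum_{r\ge p}r^{-2}$.
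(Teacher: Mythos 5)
Your proposal is correct and follows essentially the same route as the paper: form the generating function $\sum_{k\ge0}c_{k,p}z^k$, use the explicit formula \eqref{eq:ckqexplicit} to write it as $\exp\big(\sum_{j\ge1}A_{j,p}z^j/j\big)$, substitute the closed forms of $A_{j,p}$ to obtain the product representation $e^{zA_{1,p}}\prod_{q\ge p}(1-z/q)^{-1}e^{-z/q}$, and then extract $\eta_p$ as $\lim_{z\to p}(1-z/p)C_p(z)$ using the expansion \eqref{eq:etap}. The only difference is that you spell out the analytic justifications (absolute convergence for $|z|<p$, meromorphic continuation of the product) a bit more explicitly than the paper does.
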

\begin{proof}
Consider the generating function $C_p(z) = \sum_{k\ge0} c_{k,p} z^k$.
On one hand, the explicit formula \eqref{eq:ckqexplicit} for $c_{k,p}$ implies
\begin{align*}
C_p(z) = \sum_{k\ge0} c_{k,p} z^k & = \sum_{k\ge0}z^k \ \sum_{n_1 +2n_2\cdots = k} \prod_{j\ge1} \frac{\big(A_{j,p}/j\big)^{n_j}}{n_j!}
= \prod_{j\ge1} \sum_{n_j\ge0}\frac{1}{n_j!}\big(A_{j,p}z^j/j\big)^{n_j}\\
& = \prod_{j\ge1} \exp\big(A_{j,p}z^j/j\big) = \exp\big(\sum_{j\ge1}A_{j,p}z^j/j\big).
\end{align*}
Then recalling $A_{j,p} = \sum_{q\ge p}q^{-j}$ for $j\ge2$,
\begin{align}
C_p(z) & = \exp\big(zA_{1,p}+\sum_{q\ge p}\sum_{j\ge2}(z/q)^j/j\big) = e^{zA_{1,p}}\exp\big(-\sum_{q\ge p}[\log(1-z/q)+z/q]\big) 
\nonumber\\
& = e^{zA_{1,p}}\prod_{q\ge p}(1-z/q)^{-1}e^{-z/q}. \label{eq:Cexplicit}
\end{align}
On the other hand, by the expansion \eqref{eq:etap} for $c_k$ we have
\begin{align}\label{eq:Ceta}
C_p(z) = \sum_{k\ge0} c_k z^k & = \eta_p\sum_{k\ge0} (z/p)^k \ + \ O_p\Big(\sum_{k\ge0} (z/q)^k\Big)
= \frac{\eta_p}{1-z/p} \ + \ \frac{O_p(1)}{1-z/q}.
\end{align}
since $A_{1,p}=-\sum_{q< p}q^{-1}$. So comparing $C_p(z)$ from \eqref{eq:Cexplicit} and \eqref{eq:Ceta} at the pole $z=p$,
\begin{align*}
\eta_p = \lim_{z\to p} C_p(z)(1-z/p) = e^{pA_{1,p}-1}\prod_{q> p}(1-p/q)^{-1}e^{-p/q}.
\end{align*}
Hence the result follows since $A_{1,p}=-\sum_{q< p}q^{-1}$.
\end{proof}

Finally, we obtain an expansion for the original sequence $c_{k,p_1}=c_k$ to arbitrary order, which gives a considerable refinement of Theorem \ref{thm:ck2k}.
\begin{theorem}
For each prime $q$,
\begin{align*}
c_k = \sum_{p<q}\alpha_p\,p^{-k} \ + \ O_q(q^{-k})
\end{align*}
where $\alpha_p:=e^{-1}\prod_{q\neq p}(1-\tfrac{p}{q})^{-1}e^{-p/q}$. In particular $c_k = \alpha_2\,2^{-k} + O(3^{-k})$.
\end{theorem}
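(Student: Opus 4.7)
The plan is to recognize this theorem as essentially an immediate consequence of the two preceding propositions, combined with one algebraic identity. I would apply Proposition \ref{prop:ckpnexpand} at $n = 1$, which since $c_{k,p_1} = c_k$ gives for any $h \ge 1$,
\begin{align*}
c_k = \sum_{l=0}^{h-1}\eta_{p_{1+l}}^{(1)}\,p_{1+l}^{-k} \ + \ O_h(p_{1+h}^{-k}).
\end{align*}
Fixing $h$ so that $p_{1+h} = q$ produces an expansion of the required shape, indexed over primes $p < q$, with remainder $O_q(q^{-k})$. So the theorem will follow once I verify the identification $\eta_p^{(1)} = \alpha_p$ for each prime $p$. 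The special case $c_k = \alpha_2\,2^{-k} + O(3^{-k})$ will then be just the statement at $q = 3$.

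To prove $\eta_p^{(1)} = \alpha_p$, I would substitute the explicit formula for $\eta_p$ from Proposition \ref{prop:etap} into the definition of $\eta_p^{(1)}$ given in Proposition \ref{prop:ckpnexpand}. Writing $p = p_{1+l}$, the denominator $\prod_{i=0}^{l-1}(1 - p/p_{1+i})$ appearing in $\eta_{p_{1+l}}^{(1)}$ ranges exactly over the primes strictly less than $p$, so it contributes a factor $\prod_{q<p}(1-p/q)^{-1}$ that combines with the $\prod_{q>p}(1-p/q)^{-1}$ from $\eta_p$ to give a single product $\prod_{q\neq p}(1-p/q)^{-1}$.

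The remaining bookkeeping is to collapse the exponential prefactors correctly. The exponential $e^{-\sum_{q\le p}p/q}$ in $\eta_p$ splits off the $q=p$ term (which is $p/p = 1$, producing the constant $e^{-1}$) and the remaining $q<p$ exponentials combine with the $q>p$ exponentials $e^{-p/q}$ already present in $\eta_p$ to form a uniform factor $\prod_{q\neq p}e^{-p/q}$. Reassembling yields
\begin{align*}
\eta_p^{(1)} = e^{-1}\prod_{q\neq p}(1-\tfrac{p}{q})^{-1}e^{-p/q} = \alpha_p,
\end{align*}
as required. I do not anticipate any real obstacle: all the analytic work is already carried by Propositions \ref{prop:ckpnexpand} and \ref{prop:etap}, and what remains is careful accounting of indices together with the observation that the singular $q=p$ term inside the exponential is precisely what supplies the overall $e^{-1}$ in the definition of $\alpha_p$.
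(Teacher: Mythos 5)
Your proposal is correct and follows essentially the same route as the paper: specialize Proposition \ref{prop:ckpnexpand} at $n=1$, then substitute the formula for $\eta_p$ from Proposition \ref{prop:etap} into the definition of $\eta_p^{(1)}$ and simplify to obtain $\alpha_p$. Your accounting of the $q=p$ term producing the $e^{-1}$ factor is the same one-line simplification the paper performs.
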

\begin{proof}
Setting $n=1$ in Proposition \ref{prop:ckpnexpand}, the sequence $c_{k,p_1} = c_k$ satisfies
\begin{align*}
c_k = \sum_{p<q}\eta_p^{(1)}\,p^{-k} \ + \ O_q(q^{-k})
\end{align*}
where, by definition of $\eta_p^{(1)}$ in \eqref{eq:ckpn}, Proposition \ref{prop:etap} gives
\begin{align*}
\eta_p^{(1)}:=\eta_p/ \prod_{q<p}(1-\tfrac{p}{q}) = e^{-1}\prod_{q\neq p}(1-\tfrac{p}{q})^{-1}e^{-p/q} = \alpha_p.
\end{align*}
\end{proof}

\section{Analytic proof for $k$ in uniform range}

We prove Theorem \ref{thm:uniform} which quantitative error, which we state below.

\begin{theorem}
Let $r = k/\log_2 x$ and define $\eta(z) = e^{\gamma z}\prod_p (1-\tfrac{1}{p})^z (1-\tfrac{z}{p})^{-1}$. For any $\varepsilon>0$, as $x\to\infty$ we have uniformly for $r \le 2-\varepsilon$,
\begin{align*}
\sum_{\substack{\Omega(n)=k\\P^+(n)\le x}}\frac{1}{n} \ = \ \eta(r)\ \frac{(\log_2 x)^k}{k!}\ \Big(1 + O_{\varepsilon}\big(\tfrac{k}{(\log_2 x)^2}\big)\Big).
\end{align*}
\end{theorem}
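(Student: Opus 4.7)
The plan is an analytic argument, extracting the $k$-th coefficient from the generating function
$$F(z,x) \ := \ \sum_{k\ge0}Z_k(1,x)\,z^k \ = \ \prod_{p\le x}(1-z/p)^{-1},$$
as in Proposition \ref{prop:Pkpart}. The first step is to decouple the $x$- and $z$-dependence of $F$. Expanding $-\log(1-z/p) = z/p + \sum_{j\ge2}(z/p)^j/j$ and applying Mertens' 2nd theorem to the $j=1$ sum, together with the trivial tail bound $\sum_{p>x}\sum_{j\ge2}(z/p)^j/j = O(|z|^2/(x\log x))$, I would derive that uniformly for $|z|\le 2-\varepsilon$,
$$F(z,x) \ = \ (\log x)^z\,\eta(z)\,\bigl(1 + O_\varepsilon(|z|/\log x)\bigr),$$
using the equivalent product form $\eta(z) = e^{\beta z}\prod_p(1-z/p)^{-1}e^{-z/p}$, which matches the statement by \eqref{eq:beta} since $\sum_p(\log(1-1/p)+1/p) = \beta - \gamma$.

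Next, Cauchy's integral formula on $|z|=r$ with $r := k/\log_2 x$ (the saddle point of $(\log x)^z/z^{k+1}$) gives
$$Z_k(1,x) \ = \ \frac{1}{2\pi i}\oint_{|z|=r}\frac{F(z,x)}{z^{k+1}}\,dz \ = \ \frac{1}{2\pi i}\oint_{|z|=r}\frac{(\log x)^z\,\eta(z)}{z^{k+1}}\,dz \ + \ E_1.$$
Since $|\eta(z)|=O_\varepsilon(1)$ on $|z|\le 2-\varepsilon$ (away from the pole $z=2$), the secondary error $E_1$ is dominated by $O_\varepsilon(r/\log x)\cdot \eta(r)\tfrac{(\log_2 x)^k}{k!}$, via the Bessel identity $\frac{1}{2\pi}\int_{-\pi}^{\pi}(\log x)^{r\cos\theta}\,d\theta = I_0(k)\sim e^k/\sqrt{2\pi k}$ and Stirling's formula. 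This is far smaller than the claimed error.

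The crux is saddle-point analysis of the main integral. Parametrizing $z=re^{i\theta}$ and using $r\log_2 x=k$,
$$\frac{1}{2\pi i}\oint_{|z|=r}\frac{(\log x)^z\,\eta(z)}{z^{k+1}}\,dz \ = \ \frac{e^k}{2\pi r^k}\int_{-\pi}^{\pi}\eta(re^{i\theta})\,e^{k\phi(\theta)}\,d\theta, \qquad \phi(\theta) := e^{i\theta}-1-i\theta.$$
Since $\tfrac{d}{d\theta}e^{k\phi(\theta)} = ik(e^{i\theta}-1)e^{k\phi(\theta)}$, periodicity yields the exact identity $\int_{-\pi}^\pi(e^{i\theta}-1)e^{k\phi}\,d\theta = 0$, and integration by parts yields $\int_{-\pi}^\pi(e^{i\theta}-1)^2 e^{k\phi}\,d\theta = -\tfrac{1}{k}\int_{-\pi}^{\pi}e^{k\phi}\,d\theta$. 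Taylor-expanding $\eta(re^{i\theta}) = \eta(r) + r\eta'(r)(e^{i\theta}-1) + \tfrac{r^2\eta''(r)}{2}(e^{i\theta}-1)^2 + \cdots$, the linear term cancels exactly, the quadratic term contributes $-\tfrac{r^2\eta''(r)}{2k\eta(r)}$ times the main integral (yielding relative error $O_\varepsilon(r^2/k) = O_\varepsilon(k/(\log_2 x)^2)$), and higher-order terms give $O(r^m/k^{\lceil m/2\rceil})$, which are dominated. Since $\tfrac{(\log_2 x)^k}{k!} = \tfrac{e^k}{2\pi r^k}\int_{-\pi}^\pi e^{k\phi(\theta)}\,d\theta$ (by Cauchy applied to $e^{z\log_2 x}$), the claimed estimate follows.

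The main technical hurdle is justifying the Taylor expansion of $\eta(re^{i\theta})$ globally on $|\theta|\le\pi$: the series about $z=r$ has radius of convergence $2-r$, which covers the full circle $|z|=r$ only when $r<2/3$. For $r\in(2/3,2-\varepsilon)$ one splits at some $\delta>0$: on $|\theta|\le\delta$ a finite Taylor expansion with explicit remainder applies, while on $|\theta|>\delta$ the bound $\Re\phi(\theta) = \cos\theta - 1 \le -2\theta^2/\pi^2$ gives $|e^{k\phi(\theta)}|\le e^{-2k\delta^2/\pi^2}$, yielding negligible contribution (noting $k\ge (2/3)\log_2 x \to\infty$ in this regime) together with uniform boundedness of $\eta$. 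For the remaining case $r<2/3$ (including $k$ fixed), the Taylor series converges globally on $|z|=r$ and the argument goes through without splitting.
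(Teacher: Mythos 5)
Your proposal follows essentially the same route as the paper: the generating function $\sum_k Z_k(1,x)z^k = \prod_{p\le x}(1-z/p)^{-1}$, the decoupling $F(z,x) = (1+O(1/\log x))\eta(z)(\log x)^z$ via Mertens' second theorem, Cauchy's formula on the circle $|z|=r$ (the saddle point), and cancellation of the linear term in the Taylor expansion of $\eta$. The identity $\int_{-\pi}^\pi(e^{i\theta}-1)e^{k\phi}\,d\theta=0$ you use is just the paper's equation \eqref{eq:zero} in the variable $\theta$, since $z-r=r(e^{i\theta}-1)$ on $|z|=r$.

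The one place where you have made things harder than necessary is in handling the quadratic (and higher-order) corrections. You propose expanding $\eta(re^{i\theta})$ as a full Taylor series in powers of $e^{i\theta}-1$, and then correctly observe that the series about $z=r$ has radius of convergence $2-r$ while the farthest point of the circle is at distance $2r$, so convergence on all of $|z|=r$ fails once $r>2/3$; you patch this by a $|\theta|\lessgtr\delta$ split with a Gaussian decay bound $\Re\phi(\theta)\le -2\theta^2/\pi^2$ for the tails. This works, but the paper's route avoids the issue entirely: rather than a series, it uses the exact second-order Taylor formula with integral remainder, $\eta(z)-\eta(r)-\eta'(r)(z-r)=\int_r^z(z-w)\eta''(w)\,dw\ll|z-r|^2$, which needs only that $\eta$ be analytic on the segment $[r,z]$. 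For $|z|=r\le 2-\varepsilon$ that segment lies in $\{|w|\le r\}$, well inside the disk of analyticity $|w|<2$, so the bound holds with no case split. You could simplify your write-up by adopting this finite-order remainder in place of the infinite series; after that step, your estimate of $\int_{|z|=r}|z-r|^2(\log x)^z\,dz/z^{k+1}\ll k(\log_2 x)^{k-2}/k!$ via $|\sin\pi\theta|\le\pi|\theta|$, $\cos 2\pi\theta\le 1-8\theta^2$ and Stirling is exactly the paper's computation.

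One small caveat: your claim that the quadratic term contributes precisely $-\tfrac{r^2\eta''(r)}{2k\eta(r)}$ times the main integral is an over-statement (the identity $\int(e^{i\theta}-1)^2e^{k\phi}\,d\theta=-\tfrac1k\int e^{k\phi}\,d\theta$ you quote is not exact; integration by parts gives a remainder involving $\int e^{i\theta}(e^{i\theta}-1)e^{k\phi}$, which does not vanish). This does not affect the order of the error, but you should only claim the upper bound $O_\varepsilon(r^2/k)$, not an asymptotic for the coefficient.
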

\begin{proof}
By Cauchy's residue formula, we have for any $r<2$,
\begin{align}\label{eq:Cauchy1}
Z_k(1,x) = \frac{1}{2\pi i}\int_{|z|=r} f_x(z) \frac{dz}{z^{k+1}},
\end{align}
where $f_x$ is given by the power series
\begin{align*}
f_x(z) & = \sum_{k\ge0}Z_k(1,x)z^k = \sum_{P^+(n)\le x}\frac{z^{\Omega(n)}}{n}\\
&= \prod_{p\le x}\Big(1+\frac{z}{p}+\frac{z^2}{p^2}+\cdots\Big) = \prod_{p\le x}\Big(1-\frac{z}{p}\Big)^{-1}\\
& = (1+O(E(x)))e^{z\gamma} (\log x)^z\prod_{p\le x}\Big(1-\frac{z}{p}\Big)^{-1}\Big(1-\frac{1}{p}\Big)^{z}\\
& = \  (1+O(E(x)))\eta(z) (\log x)^z,
\end{align*}
as $\prod_{p\le x}(1-\frac{1}{p})^{-1} = (1+E(x))e^\gamma \log x$ by Merten's $2^{\rm nd}$ theorem in quantitative form.\footnote{This also follows from the prime number theorem.}

Hence \eqref{eq:Cauchy1} becomes
\begin{align}\label{eq:PkCauchy}
Z_k(1,x) = \frac{1+O(E(x))}{2\pi i}\int_{|z|=r} \eta(z) (\log x)^z\frac{dz}{z^{k+1}}.
\end{align}

The desired main term in Theorem \ref{thm:uniform} is given by evaluating $\eta(z)$ at $z=r$, namely
\begin{align}\label{eq:mainterm}
\frac{\eta(r)}{2\pi i}\int_{|z|=r}(\log x)^z\frac{dz}{z^{k+1}} = \eta(r) \frac{(\log_2 x)^k}{k!}.
\end{align}

For the error we follow the argument in \cite[p.233]{MVtext}, which we provide for completeness. Recall $E(x)\ll 1/\log x$. For $|z|=r = k/\log_2 x$, integration by parts gives
\begin{align}
\frac{1}{2\pi i}\int_{|z|=r}(z-r)(\log x)^z \frac{dz}{z^{k+1}} = \frac{(\log_2 x)^{k-1}}{(k-1)!} - \frac{r(\log_2 x)^k}{k!} \ = \ 0, \label{eq:zero}\\
\text{and}\qquad \eta(z) - \eta(r) - \eta'(r)(z-r) = \int_r^z (z-w)\eta''(w)\,dw \ll |z-r|^2.  \label{eq:bding}
\end{align}
Thus subtracting \eqref{eq:mainterm} from \eqref{eq:PkCauchy}, the error is
\begin{align*}
\eta(r) \frac{(\log_2 x)^k}{k!} \ - \ Z_k(1,x)
\ & \ll \int_{|z|=r}[\eta(r)-\eta(z)](\log x)^z\frac{dz}{z^{k+1}}  \\ 
& \overset{\eqref{eq:zero}}{=} \int_{|z|=r}[\eta(r)-\eta(z)-\eta'(r)(z-r)](\log x)^z\frac{dz}{z^{k+1}} \\
& \overset{\eqref{eq:bding}}{\ll} \int_{|z|=r}|z-r|^2(\log x)^z\frac{dz}{z^{k+1}}\\
&\ll r^{2-k} \int_{-1/2}^{1/2} (\sin \pi\theta)^2 e^{k\cos(2\pi\theta)}\;d\theta \\
&\ll r^{2-k} e^k \int_0^\infty \theta^2 e^{-8k\theta^2}d\theta
\ll r^{2-k} e^k k^{-3/2}\\
& \ = (\log_2 x)^{k-2}(e/k)^k k^{1/2} \ \ll \ k(\log_2 x)^{k-2}/k!
\end{align*}
Here we used $|\sin x|\le x$, $\cos(2\pi\theta) \le 1 - 8\theta^2$ for $|\theta|\le 1/2$, and Stirling's formula.
\end{proof}

\section*{Acknowledgments}
The author is grateful to Paul Kinlaw and James Maynard for stimulating conversations, as well as to Paul Pollack and Carl Pomerance for valuable feedback. In addition, the author thanks the anonymous referee for comments to clarify the paper. The author is supported by a Clarendon Scholarship at the University of Oxford.

\bibliographystyle{amsplain}

\begin{thebibliography}{10}

\bibitem{Duff} R. J. Duffin, {\it Representation of Fourier integrals as sums, III.} Proc. Amer. Math. Soc. {\bf 8} (1957), 272--277.

\bibitem{ErdSark} P. Erd\H{o}s, A. S\'ark\"ozy, \textit{On the number of prime factors of integers}, Acta Sci. Math. {\bf 42} (1980), 237--246.

\bibitem{FouvTen} E. Fouvry, G. Tenenbaum, {\it Entiers sans grand facteur premier\`e en progressions arithm\'etiques}, Proc. London Math. Soc. {\bf 63} (1991), 449--494.

\bibitem{SpencGram} J. Spencer, R. Graham, {\it The elementary proof of the prime number theorem}, Math. Intelligencer {\bf 31} (2009), 18--23.


\bibitem{Landau} E. Landau. {\it Handbuch der Lehre von der Verteilung der Primzahlen.} Erster Band. Leipzig, Berlin: B. G. Teubner, (1909).

\bibitem{JDAlmost} J. D. Lichtman, \textit{Almost primes and the Banks--Martin conjecture}, J. Number Theory, {\bf 211} (2020), 513--529.

\bibitem{Mert} F. Mertens, {\it Ein Beitrag zur analytischen Zahlentheorie}, J. reine angew. Math. {\bf 78} (1874), 46--62.


\bibitem{MVtext} H. L. Montgomery, R. C. Vaughan, \textit{Multiplicative Number Theory I: Classical Theory}, Cambridge University Press, Cambridge, 2006.

\bibitem{PP} P. Pollack, {\it A generalization of the Hardy-Ramanujan inequality and applications}, J. Number Theory, {\bf 210} (2020), 171--182.

\bibitem{Popa2} D. Popa, {\it A double Mertens type evaluation}, J. Math. Anal. Appl. {\bf 409} (2014), 1159--1163.

\bibitem{Popa3} D. Popa, {\it A triple Mertens evaluation}, J. Math. Anal. Appl. {\bf 444} (2016), 464--474.

\bibitem{Tenen} G. Tenenbaum. {\it Generalized Mertens sums}, in Analytic number theory, modular forms and q-hypergeometric series -- in honor of Krishna Alladi's 60th birthday, G. Andrews and F. Garvan, eds., Springer Proc. Math. Stat. {\bf 221}, Springer, Cham, (2017), 477--495.

\bibitem{Tentext} G. Tenenbaum, {\it Introduction to analytic and probabilistic number theory}, Graduate Studies in Mathematics {\bf 163}, Amer. Math. Soc., 2015.



\end{thebibliography}

\end{document}